\documentclass[11pt,leqno]{article}
\usepackage[margin=1in]{geometry} 
\usepackage{amssymb,amsfonts,amsmath,bbm,mathrsfs,stmaryrd,mathtools}
\usepackage{xcolor}
\usepackage{url}

\usepackage{graphicx}

\usepackage{accents}

\usepackage{extarrows}

\usepackage[shortlabels]{enumitem}
\usepackage{tensor}

\usepackage{xr}
\usepackage[T1]{fontenc}
\usepackage[utf8]{inputenc}
\usepackage[colorlinks,
linkcolor=black!75!red,
citecolor=blue,
pdftitle={},
pdfproducer={pdfLaTeX},
pdfpagemode=None,
bookmarksopen=true,
bookmarksnumbered=true,
backref=page]{hyperref}

\usepackage{tikz}
\usetikzlibrary{arrows,calc,decorations.pathreplacing,decorations.markings,decorations.shapes,intersections,shapes.geometric,through,fit,shapes.symbols,positioning,decorations.pathmorphing}

\makeatletter
\newlength\zig@L
\newlength\zig@La
\newlength\zig@Lb

\newcommand{\xzigrightarrow}[2][]{%
  \mathrel{%
    \settowidth{\zig@La}{$\scriptstyle #2$}%
    \settowidth{\zig@Lb}{$\scriptstyle #1$}%
    \zig@L=\zig@La\relax
    \ifdim\zig@Lb>\zig@L \zig@L=\zig@Lb\fi
    \advance\zig@L by 2.2em\relax
    \tikz[baseline=-0.65ex]{%
      \draw[->,
            line cap=round,
            decorate,
            decoration={zigzag,segment length=4pt,amplitude=1.1pt}]%
        (0,0) -- (\zig@L,0)
        node[midway,above=2pt] {$\scriptstyle #2$}%
        \if\relax\detokenize{#1}\relax\else
          node[midway,below=2pt] {$\scriptstyle #1$}%
        \fi
      ;
    }%
  }%
}
\makeatother

\makeatletter
\newcommand{\squigjoin}{1mu} % tune this: 0.5mu, 1mu, 1.5mu, ...

\def\sqleft@{\sim}                    % no overlap here
\def\sqmid@{\sim\mkern-\squigjoin}    % overlap only between repeated mids

\def\rightsquigarrowfill@{%
  \arrowfill@{\sqleft@}{\sqmid@}{\mkern-4mu\succ}%
}

\newcommand{\xrightsquigarrow}[2][]{%
  \ext@arrow 0359\rightsquigarrowfill@{#1}{#2}%
}
\makeatother

\makeatletter
\newcommand*\circled[1]{\tikz[baseline=(char.base)]{
    \node[shape=circle, draw, inner sep=0pt, 
    minimum height={\f@size},] (char) {\vphantom{WAH1g}#1};}}
\makeatother

\makeatletter
\DeclareRobustCommand\widecheck[1]{{\mathpalette\@widecheck{#1}}}
\def\@widecheck#1#2{%
    \setbox\z@\hbox{\m@th$#1#2$}%
    \setbox\tw@\hbox{\m@th$#1%
       \widehat{%
          \vrule\@width\z@\@height\ht\z@
          \vrule\@height\z@\@width\wd\z@}$}%
    \dp\tw@-\ht\z@
    \@tempdima\ht\z@ \advance\@tempdima2\ht\tw@ \divide\@tempdima\thr@@
    \setbox\tw@\hbox{%
       \raise\@tempdima\hbox{\scalebox{1}[-1]{\lower\@tempdima\box
\tw@}}}%
    {\ooalign{\box\tw@ \cr \box\z@}}}
\makeatother

\usepackage{braket}

\usepackage[amsmath,thmmarks,hyperref]{ntheorem}
\usepackage{cleveref}

\newcommand\nthalias[1]{\AddToHook{env/#1/begin}{\crefalias{lemma}{#1}}}

\nthalias{definition}
\nthalias{example}
\nthalias{examples}
\nthalias{remark}
\nthalias{remarks}
\nthalias{convention}
\nthalias{notation}
\nthalias{construction}
\nthalias{sketch}
\nthalias{theoremN}
\nthalias{propositionN}
\nthalias{corollaryN}
\nthalias{lemma}
\nthalias{proposition}
\nthalias{corollary}
\nthalias{theorem}
\nthalias{conjecture}
\nthalias{question}
\nthalias{assumption}

\creflabelformat{enumi}{#2#1#3}

\crefname{section}{Section}{Sections}
\crefformat{section}{#2Section~#1#3} 
\Crefformat{section}{#2Section~#1#3} 

\crefname{subsection}{\S}{\S\S}
\AtBeginDocument{%
  \crefformat{subsection}{#2\S#1#3}%
  \Crefformat{subsection}{#2\S#1#3}% 
}

\crefname{subsubsection}{\S}{\S\S}
\AtBeginDocument{%
  \crefformat{subsubsection}{#2\S#1#3}%
  \Crefformat{subsubsection}{#2\S#1#3}% 
}

\theoremstyle{plain}

\newtheorem{lemma}{Lemma}[section]
\newtheorem{proposition}[lemma]{Proposition}
\newtheorem{corollary}[lemma]{Corollary}
\newtheorem{theorem}[lemma]{Theorem}

\theoremstyle{plain}
\theoremnumbering{Alph}

\theoremstyle{plain}
\theorembodyfont{\upshape}
\theoremsymbol{\ensuremath{\blacklozenge}}

\newtheorem{definition}[lemma]{Definition}
\newtheorem{example}[lemma]{Example}

\newtheorem{remark}[lemma]{Remark}

\newtheorem{notation}[lemma]{Notation}

\crefname{definition}{definition}{definitions}
\crefformat{definition}{#2definition~#1#3} 
\Crefformat{definition}{#2Definition~#1#3} 

\crefname{ex}{example}{examples}
\crefformat{example}{#2example~#1#3} 
\Crefformat{example}{#2Example~#1#3} 

\crefname{exs}{example}{examples}
\crefformat{examples}{#2example~#1#3} 
\Crefformat{examples}{#2Example~#1#3} 

\crefname{remark}{remark}{remarks}
\crefformat{remark}{#2remark~#1#3} 
\Crefformat{remark}{#2Remark~#1#3} 

\crefname{remarks}{remark}{remarks}
\crefformat{remarks}{#2remark~#1#3} 
\Crefformat{remarks}{#2Remark~#1#3} 

\crefname{convention}{convention}{conventions}
\crefformat{convention}{#2convention~#1#3} 
\Crefformat{convention}{#2Convention~#1#3} 

\crefname{notation}{notation}{notations}
\crefformat{notation}{#2notation~#1#3} 
\Crefformat{notation}{#2Notation~#1#3} 

\crefname{table}{table}{tables}
\crefformat{table}{#2table~#1#3} 
\Crefformat{table}{#2Table~#1#3}

\crefname{lemma}{lemma}{lemmas}
\crefformat{lemma}{#2lemma~#1#3} 
\Crefformat{lemma}{#2Lemma~#1#3} 

\crefname{proposition}{proposition}{propositions}
\crefformat{proposition}{#2proposition~#1#3} 
\Crefformat{proposition}{#2Proposition~#1#3} 

\crefname{propositionN}{proposition}{propositions}
\crefformat{propositionN}{#2proposition~#1#3} 
\Crefformat{propositionN}{#2Proposition~#1#3} 

\crefname{corollary}{corollary}{corollaries}
\crefformat{corollary}{#2corollary~#1#3} 
\Crefformat{corollary}{#2Corollary~#1#3} 

\crefname{corollaryN}{corollary}{corollaries}
\crefformat{corollaryN}{#2corollary~#1#3} 
\Crefformat{corollaryN}{#2Corollary~#1#3} 

\crefname{theorem}{theorem}{theorems}
\crefformat{theorem}{#2theorem~#1#3} 
\Crefformat{theorem}{#2Theorem~#1#3} 

\crefname{theoremN}{theorem}{theorems}
\crefformat{theoremN}{#2theorem~#1#3} 
\Crefformat{theoremN}{#2Theorem~#1#3} 

\crefname{enumi}{}{}
\crefformat{enumi}{#2#1#3}
\Crefformat{enumi}{#2#1#3}

\crefname{assumption}{assumption}{Assumptions}
\crefformat{assumption}{#2assumption~#1#3} 
\Crefformat{assumption}{#2Assumption~#1#3} 

\crefname{construction}{construction}{Constructions}
\crefformat{construction}{#2construction~#1#3} 
\Crefformat{construction}{#2Construction~#1#3} 

\crefname{sketch}{sketch}{Sketches}
\crefformat{sketch}{#2sketch~#1#3} 
\Crefformat{sketch}{#2Sketch~#1#3} 

\crefname{question}{question}{Questions}
\crefformat{question}{#2question~#1#3} 
\Crefformat{question}{#2Question~#1#3} 

\crefname{equation}{}{}
\crefformat{equation}{(#2#1#3)} 
\Crefformat{equation}{(#2#1#3)}

\numberwithin{equation}{section}

\theoremstyle{nonumberplain}
\theoremsymbol{\ensuremath{\blacksquare}}

\newtheorem{proof}{Proof}

\newcommand\bZ{{\mathbb Z}}

\newcommand\cA{{\mathcal A}}

\newcommand\cC{{\mathcal C}}
\newcommand\cD{{\mathcal D}}

\newcommand\cS{{\mathcal S}}

\newcommand\ol{\overline}

\DeclareMathOperator{\id}{id}

\newcommand{\cat}[1]{\textsc{#1}}

\newcommand\spr[1]{\cite[\href{https://stacks.math.columbia.edu/tag/#1}{Tag {#1}}]{stacks-project}}

\newcommand{\comment}[1]{}

\newcommand{\xrightarrowdbl}[2][]{%
  \xrightarrow[#1]{#2}\mathrel{\mkern-14mu}\rightarrow
}

\title{Injectivity paucity in AB5 categories of oversize chains}
\author{Alexandru Chirvasitu}

\begin{document}

\date{}

\newcommand{\Addresses}{{% additional braces for segregating \footnotesize
  \bigskip
  \footnotesize

  \textsc{Department of Mathematics, University at Buffalo}
  \par\nopagebreak
  \textsc{Buffalo, NY 14260-2900, USA}  
  \par\nopagebreak
  \textit{E-mail address}: \texttt{achirvas@buffalo.edu}

}}

\maketitle

\begin{abstract}
  We construct examples of abelian categories with no non-zero injective (or projective) objects satisfying Grothendieck's AB5 condition. The procedure combines Rickard's examples of AB5 categories without products but some non-trivial injectives (also addressing an apparent gap in the literature) with a 2-functorial construct attaching to any category $\mathcal{C}$ that of $\mathcal{C}$-objects equipped with set-indexed families of endomorphisms. 
\end{abstract}

\noindent \emph{Key words:
  Grothendieck AB conditions;
  Grothendieck category;
  adjoint functor;
  enough injectives;
  essential extension;
  injective hull;
  ordinal number;
  well-powered
}

\vspace{.5cm}

\noindent{MSC 2020: 18E10; 18G05; 18A35; 18A30; 18A20; 18N10; 18E40; 03E10
  % 18E10   	Abelian categories, Grothendieck categories
  % 18G05   	Projectives and injectives (category-theoretic aspects)
  % 18A35   	Categories admitting limits (complete categories), functors preserving limits, completions  
  % 18A30   	Limits and colimits (products, sums, directed limits, pushouts, fiber products, equalizers, kernels, ends and coends, etc.)
  % 18A20   	Epimorphisms, monomorphisms, special classes of morphisms, null morphisms
  %% 18E35   	Localization of categories, calculus of fractions
  % 18N10  	2-categories, bicategories, double categories
  % 18E40   	Torsion theories, radicals
  % 03E10   	Ordinal and cardinal numbers
  
}

%\tableofcontents

%%%%%%%%%%%%%%%%%%%%%%%%%%%%%%%%
%%%%%%%%%%%%%%%%%%%%%%%%%%%%%%%%
\section*{Introduction}

Recall \cite[Theorem 2.4]{MR4092830} Rickard's example of an abelian category satisfying Grothendieck's \emph{AB5 condition} (\cite[Theorem 2.8.6 and subsequent discussion]{pop_ab-cat}: abelian, cocomplete, with exact filtered colimits), but which is nevertheless not complete (i.e. does not have arbitrary limits, because it does not have countable products; in AB-condition language, it is not \emph{AB3$^*$}):
\begin{itemize}[wide]
\item Fix a functor
  \begin{equation}\label{eq:kappa}
    \text{large poset of ordinal numbers}
    =:
    \left(\cat{ON},\le\right)
    \xrightarrow[\quad\alpha\mapsto \Bbbk_{\alpha}\quad]{\quad{\Bbbk}\quad}
    \cat{Fields}
  \end{equation}
  with infinite-degree $\Bbbk_{\alpha}<\Bbbk_{\beta}$ whenever $\alpha\lneq \beta$.

\item The attached category $\cC_{{\Bbbk}}$ then consists of (large) tuples $\left(V_{\alpha}\right)_{\alpha\in \cat{ON}}$ of $\Bbbk_{\alpha}$-linear spaces and maps
  \begin{equation*}
    V_{\alpha}
    \xrightarrow[\quad\alpha\le \beta\quad]{\quad\nu_{\beta\alpha}\quad}
    V_{\beta}
  \end{equation*}
  compatible with the $\Bbbk_{\alpha}$-linear structures in the obvious sense, with $(V_{\alpha},\nu_{\beta\alpha})$ \emph{$\alpha$-grounded} \cite[Definition 2.2]{MR4092830} for some $\alpha\in \cat{ON}$:
  \begin{equation*}
    \forall\left(\beta>\alpha\right)
    \left(V_{\beta}=\Bbbk_{\beta}\nu_{\beta\alpha}\left(V_{\alpha}\right)\right).
  \end{equation*}

\item There is an obvious embedding $\cC_{\Bbbk}\subset \ol{\cC}_{\Bbbk}$, the latter consisting of arbitrary $(V_{\alpha})$. It is not \emph{locally small}\footnote{A word of caution is in order concerning the term: we reference \cite{pop_ab-cat}, where ``locally small'' has a different meaning altogether; there, it means \emph{well-powered} (i.e. objects have only sets of subobjects rather than proper classes).}, in the sense \cite[\S V.8, post Corollary]{mcl_2e} that its hom-sets are generally proper classes. Before it becomes clear that the objects involved are grounded, abelian-category notions such as (co)kernels are to be understood as housed by $\ol{\cC}_{\Bbbk}$. 
\end{itemize}

What ultimately powers the example is the absence of a \emph{generator} \cite[Definition 4.5.1]{brcx_hndbk-1}: \emph{Grothendieck categories} (i.e. AB5 with a generator\footnote{Concerning references to \cite{freyd_abcats}, which do feature below, note the terminological discrepancy from what seems to be modern standard practice: in Freyd's book ``Grothendieck''=``AB5''.}: \cite[\S 2.8, Note 3]{pop_ab-cat}) are automatically AB3$^*$ by \cite[Corollary 3.7.10]{pop_ab-cat}. The motivation for the present note has been twofold:
\begin{itemize}[wide]
\item Given the essential role played by ``categorical-smallness'' constraints such as generators or set-sized families of subobjects in proving the existence of sufficiently many injectives (e.g. \cite[Lemmas 3.10.5/3.10.6 and Theorem 3.10.10]{pop_ab-cat}), it is not unnatural to examine the extent to which the failure of precisely such constraints in $\cC_{\Bbbk}$ affects its wealth of injectives.

\item In the process, an examination of the aforementioned \cite[Theorem 2.4]{MR4092830}'s proof seems to indicate that its claim of abelianness for the individual subcategories of $\alpha$-grounded objects cannot be valid: they are full, reflective, (co)complete and \emph{pre-abelian} \cite[\S 2.2]{pop_ab-cat}, but the \emph{$\alpha$-grounded} kernel of an $\alpha$-grounded-object morphism differs, generally, from its ``naive'' kernel: \Cref{le:trnc.chn.ess}, \Cref{cor:trnc.not.ab} and \Cref{ex:ker.grnd} expand. The abelianness of the target $\cC_{\Bbbk}$ does nevertheless appear to hold (and \cite[Theorem 2.4]{MR4092830} with it), as verified in \Cref{th:is.ab.no.env}.
\end{itemize}

A summary of the results, then, would be as follows:
\begin{enumerate}[(1),wide]
\item $\cC_{\Bbbk}$ is indeed AB5 (and incomplete), but does not have enough injectives (\Cref{th:is.ab.no.env}).

\item There \emph{are}, however, non-zero injectives: they are precisely those of the individual full, (co)reflective Grothendieck subcategories consisting respectively of objects $(V_{\alpha})_{\alpha}$ with $\alpha\ge \alpha_0\Rightarrow V_{\alpha}=\{0\}$ (one for each ordinal $\alpha$; \Cref{th:inj.event.vnsh}).

\item $\cC_{\Bbbk}$ embeds fully into an AB5 incomplete category $\cC_{\Bbbk}\triangleleft\cat{Set}$ with no non-zero injectives or projectives, consisting of $\cC_{\Bbbk}$-objects $c$ equipped with maps $S\to \cC_{\Bbbk}(c,c)$ for sets $S$ (\Cref{th:incmpl.no.inj}).

  It is this last gadget that can be extrapolated into what the abstract alludes to as a ``2-categorical construct'': $\bullet\triangleleft \cat{Set}$ accepts categories with a zero object as its inputs, and preserves a number of pleasant properties such as abelianness, AB conditions, etc. 
\end{enumerate}

% %

%%%%%%%%%%%%%%%%%%%%%%%%%%%%%%%%
%%%%%%%%%%%%%%%%%%%%%%%%%%%%%%%%
\section{AB5 categories of long morphism chains}\label{se:ab5.lng}

In the sequel, the set-theoretic formalism in place should be taken to be
\begin{itemize}[wide]
\item either the familiar ZFC (Zermelo-Fraenkel+Choice) of \cite[Introduction, \S 7]{kun_st_2nd_1983}, in which case proper classes have no formal standing and formulas involving them should be regarded as the ``circumlocutions in the metatheory'' referred to in \cite[\S I.12]{kun_st_2nd_1983};

\item or the NBG (von Neumann-Bernays-G\"odel) \emph{conservative extension} of ZFC referenced briefly in the same \cite[\S I.12]{kun_st_2nd_1983} (choice-enhanced: see the discussion in \cite[\S II.7.3]{fbhl_set_1973}): proper classes are then legitimate formal objects and the formulas below involving any such follow the NBG injunction that all \emph{bound} variables be sets.
\end{itemize}

\begin{remark}\label{re:no.gen.no.wp}
  The lack of a generator manifests in other ways: abelian categories admitting a generator are \emph{well-powered} (e.g. by \cite[Corollary 7.89]{ahs}), in the sense \cite[Definition 4.1.2]{brcx_hndbk-1} that the \emph{subobjects} of any given object (i.e. \cite[Definition 4.1.1]{brcx_hndbk-1} isomorphism classes of monomorphisms with a given common target) constitute a set. $\cC_{{\Bbbk}}$, on the other hand, are easily assessed to be non-well-powered: the objects $M^{\alpha}=\left(M^{\alpha}_{\beta}\right)_{\beta}$ defined on \cite[p.444]{MR4092830} as $0$ at $\beta<\alpha$ and $\Bbbk_{\beta}$ otherwise (with the initial inclusions $\Bbbk_{\beta}\le \Bbbk_{\beta'}$ as transition maps $\nu_{\beta'\beta}$) are all subobjects of $M^0$.
\end{remark}

In light of \Cref{re:no.gen.no.wp}, and of the crucial role played by both generators and well-powered-ness \cite[Lemma 3.10.5, Theorem 3.10.10]{pop_ab-cat} in ensuring the existence of \emph{injective envelopes} in Grothendieck categories, it seems apposite to examine these and ancillary notions (e.g. \emph{essential extensions}) with reference to the categories $\cC_{{\Bbbk}}$. 

\begin{lemma}\label{le:when.ess}
  A subobject $W:=(W_{\alpha})_{\alpha}\le (V_{\alpha})_{\alpha}=:V$ in $\cC_{{\Bbbk}}$ is embedded essentially only if $W_{\alpha}=V_{\alpha}$ for sufficiently large $\alpha\in \cat{ON}$. 
\end{lemma}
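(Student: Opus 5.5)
The plan is to prove the contrapositive. Suppose $W\le V$ does \emph{not} satisfy $W_\alpha=V_\alpha$ for all sufficiently large $\alpha$. I will then produce a non-zero subobject $U\le V$ with $U\cap W=0$, which shows that $W\le V$ is not essential.

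\textbf{Step 1 (grounding and non-vanishing of the quotient).} Choose an ordinal $\gamma$ such that both $V$ and $W$ are $\gamma$-grounded. Groundedness persists upwards: if an object is $\alpha$-grounded, it is also $\alpha'$-grounded for $\alpha'\ge\alpha$. Consider the componentwise quotient $Q_\alpha:=V_\alpha/W_\alpha$ with its induced transition maps $\bar\nu_{\beta\alpha}$. Since $V_\beta=\Bbbk_\beta\nu_{\beta\gamma}(V_\gamma)$, we get $Q_\beta=\Bbbk_\beta\bar\nu_{\beta\gamma}(Q_\gamma)$ for $\beta>\gamma$. In particular, once some $Q_\beta$ with $\beta\ge\gamma$ vanishes, all later $Q_{\beta'}$ vanish as well. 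Our hypothesis says $Q_\beta\ne0$ for cofinally many $\beta$, so it follows that $Q_\beta\ne 0$ for \emph{every} $\beta\ge\gamma$.

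\textbf{Step 2 (a vector that never dies).} For $\beta\ge\gamma$, set $K_\beta:=\ker\bigl(\bar\nu_{\beta\gamma}:Q_\gamma\to Q_\beta\bigr)$. This is a non-decreasing, $\cat{ON}$-indexed chain of subspaces of the \emph{set} $Q_\gamma$. It must therefore stabilize: otherwise the strict increases would give an injection of a proper class of ordinals into the power set of $Q_\gamma$. So there is $\beta_0\ge\gamma$ with $K_\beta=K_{\beta_0}=:K$ for all $\beta\ge\beta_0$.

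We have $K\ne Q_\gamma$. Indeed, if $K=Q_\gamma$, then $\bar\nu_{\beta\gamma}(Q_\gamma)=0$ for every $\beta\ge\beta_0$, and hence $Q_\beta=\Bbbk_\beta\bar\nu_{\beta\gamma}(Q_\gamma)=0$, contradicting Step 1. Pick $v\in V_\gamma$ whose class in $Q_\gamma$ lies outside $K$. By construction, $\nu_{\beta\gamma}(v)\notin W_\beta$ for every $\beta\ge\gamma$.

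\textbf{Step 3 (the complement).} Define $U$ by
\begin{equation*}
  U_\beta:=0 \text{ for } \beta<\gamma,
  \qquad
  U_\beta:=\Bbbk_\beta\,\nu_{\beta\gamma}(v) \text{ for } \beta\ge\gamma,
\end{equation*}
with transition maps restricted from $V$. This is the analogue of the objects $M^\alpha$ of \Cref{re:no.gen.no.wp}.

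$U$ is an object of $\cC_{\Bbbk}$. The maps are well defined because $\nu_{\beta'\beta}$ is compatible with scalars, so it sends $\Bbbk_\beta\nu_{\beta\gamma}(v)$ into $\Bbbk_{\beta'}\nu_{\beta'\gamma}(v)$. The object is $\gamma$-grounded by construction.

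$U$ is non-zero, because $\nu_{\beta\gamma}(v)\ne0$ (it is not even in $W_\beta$). The componentwise inclusion $U\to V$ is a monomorphism.

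Each $U_\beta$ is a $\Bbbk_\beta$-line spanned by a vector not lying in $W_\beta$, so $U_\beta\cap W_\beta=0$ for every $\beta$. Consequently any object mapping monomorphically into both $U$ and $W$ compatibly over $V$ is componentwise zero. So $U\cap W=0$ in $\cC_{\Bbbk}$ as well, even though intersections there are a priori computed via grounded reflections.

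\textbf{Expected obstacle.} The main obstacle is Step 2. A vector $v\in V_\gamma$ whose image leaves $W_\beta$ at one stage $\beta$ could still fall into $W_{\beta'}$ at some later stage $\beta'$. The stabilization argument, which relies on $V_\gamma$ being a set while $\cat{ON}$ is a proper class, is what rules this out. A secondary point needing some care is verifying that the intersection $U\cap W$ really vanishes in $\cC_{\Bbbk}$ and not merely in $\ol{\cC}_{\Bbbk}$.
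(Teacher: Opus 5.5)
Your proof is correct and is essentially the paper's argument run in contrapositive. The paper reduces to grounded $V$, uses essentiality to get each $v\in V_\alpha$ eventually into $W$, takes a uniform $\beta$ because $V_\alpha$ is a set, and then propagates $W_\beta=V_\beta$ upward by groundedness; these correspond to your Steps 1--2 (your kernel-chain stabilization plays the role of the supremum over $v\in V_\alpha$), and your Step 3's line subobject $U_\beta=\Bbbk_\beta\nu_{\beta\gamma}(v)$ with the check that $U\cap W=0$ in $\cC_{\Bbbk}$ usefully spells out what the paper compresses into ``by the essential-embedding assumption''.
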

\begin{proof}
  Noting that the essential character of $W\le V$ ensures that of the truncated embedding 
  \begin{equation*}
    W_{\ge \alpha}
    \le
    V_{\ge \alpha}
    ,\quad
    \bullet_{\ge \alpha}
    :=
    \begin{cases}
      0&\text{under $\alpha$}\\
      \bullet&\text{at indices $\ge \alpha$}
    \end{cases}
  \end{equation*}
  for every $\alpha\in \cat{ON}$, we may assume $W$ and $V$ 0-grounded to begin with. Now, by the essential-embedding assumption again,
  \begin{equation*}
    \forall\alpha
    \forall\left(v\in V_{\alpha}\right)
    \exists\left(\beta>\alpha\right)
    \left(\nu_{\beta\alpha}v\in W_{\beta}\le V_{\beta}\right).
  \end{equation*}
  Fixing $\alpha$ and choosing $\beta>\alpha$ large enough for the latter to hold for all $v\in V_{\alpha}$, we have $\nu_{\beta\alpha}V_{\alpha}\le W_{\beta}$; this implies
  \begin{equation*}
    W_{\beta}=V_{\beta}
    \xRightarrow{\quad}
    \forall\left(\beta'\ge \beta\right)
    \left(W_{\beta'}=V_{\beta'}\right),
  \end{equation*}
  finishing the proof. 
\end{proof}

\Cref{re:no.gen.no.wp} notwithstanding, the (large) poset of subobjects of any given $V\in \cC_{{\Bbbk}}$ does satisfy the \emph{$\lambda$-ascending-chain condition} (so named after its familiar countable counterpart: \cite[Definition II.1.7]{kun_st_2nd_1983}, \cite[post Theorem 4.17.7]{pop_ab-cat}; shorthand: \emph{$\lambda$-ACC}) for sufficiently large cardinals $\lambda$: strictly ascending chains of subobjects have cardinality $<\lambda$. 

\begin{lemma}\label{le:lcc}
  If $V\in \cC_{{\Bbbk}}$ then the large poset of subobjects of $V$ is $\lambda$-ACC for sufficiently large cardinals $\lambda$.

  In fact, $\lambda$ can be chosen dependent on $\dim V_{\alpha'}$, $\alpha'\le \alpha$ as soon as $V$ is $\alpha$-grounded. 
\end{lemma}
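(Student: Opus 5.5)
The plan is to reduce the large chain condition to ordinary chain conditions on the individual vector spaces $V_\gamma$, using the grounding of $V$ to get a bound that does not depend on $\gamma$. Throughout, a ``strictly ascending chain'' means a well-ordered family $(W^i)_{i<\eta}$ of subobjects of $V$ with $W^i\lneq W^j$ for $i<j$. We must bound $|\eta|$ by a cardinal depending only on the $\dim V_{\alpha'}$, $\alpha'\le\alpha$, where $V$ is $\alpha$-grounded.

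\textbf{Step 1: a uniform dimension bound.} Let $d$ be the supremum of the $\dim_{\Bbbk_{\alpha'}}V_{\alpha'}$ for $\alpha'\le\alpha$. For $\gamma>\alpha$ the space $V_\gamma$ is $\Bbbk_\gamma$-spanned by $\nu_{\gamma\alpha}V_\alpha$, so $\dim_{\Bbbk_\gamma}V_\gamma\le \dim V_\alpha\le d$. Hence every coordinate space $V_\gamma$, $\gamma\in\cat{ON}$, has dimension at most $d$ over its own field. A well-ordered strictly increasing chain of subspaces of a space of dimension $\le d$ has fewer than $d^{+}$ terms (or at most $d+1$ if $d$ is finite), since each strict step adjoins a new element of a fixed basis-indexed filtration. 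So for every $\gamma$, the coordinate chain $(W^i_\gamma)_i$ has fewer than $d^+$ jump points. Here a jump point is an index $i$ with $W^i_\gamma\ne \bigcup_{j<i}W^j_\gamma$ or $W^i_\gamma\ne W^{i+1}_\gamma$.

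\textbf{Step 2: stabilization above a grounding level.} Take $\lambda$ regular with $\lambda>d^+$, and suppose $\eta\ge\lambda$; restrict to the first $\lambda$ terms. The colimit $W=\varinjlim_i W^i$ exists in $\cC_{\Bbbk}$ and is a subobject of $V$, by AB5, i.e.\ exactness of filtered colimits. It is computed coordinatewise, and it is $\delta$-grounded for some $\delta$, which we may take $\ge\alpha$. The chain $(W^i_\delta)_i$ has fewer than $d^+<\lambda$ jumps, so by regularity of $\lambda$ it stabilizes at its union $W_\delta$ from some $i_0<\lambda$ on. For $\gamma\ge\delta$ and $i\ge i_0$ we then get
\begin{equation*}
  W_\gamma\supseteq W^i_\gamma\supseteq \Bbbk_\gamma\nu_{\gamma\delta}W^i_\delta=\Bbbk_\gamma\nu_{\gamma\delta}W_\delta=W_\gamma .
\end{equation*}
So all coordinates $\ge\delta$ are frozen from $i_0$ on.

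\textbf{Step 3: the coordinates below $\delta$ (main obstacle).} It remains to freeze the coordinates $\gamma<\delta$ simultaneously. Each of them stabilizes by some $i_\gamma<\lambda$, by Step 1 and regularity. Taking the supremum of the $i_\gamma$ needs $\lambda>|\delta|$, but $\delta$ depends on the chain, and this is exactly where the uniformity claimed in the statement must be earned. I would first try to show that the relevant coordinates can be confined to a set of size bounded in terms of $d$. For $\gamma\ge\alpha$, set $c_\gamma(U):=\dim_{\Bbbk_\gamma}V_\gamma/U_\gamma$ for a subobject $U$. The surjections $\Bbbk_{\gamma'}\otimes_{\Bbbk_\gamma}V_\gamma\to V_{\gamma'}$ carry $\Bbbk_{\gamma'}\otimes U_\gamma$ into $U_{\gamma'}$, so $c_\gamma(U)$ is non-increasing in $\gamma$. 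Along the chain, a strict jump occurring only at coordinates above $\gamma$ must therefore increase some $W^i_{\gamma'}$ that is not yet the $\Bbbk_{\gamma'}$-span of $\nu_{\gamma'\gamma}W^i_\gamma$. I would then attach to each successor step $i\mapsto i+1$ a witnessing vector in $V_\alpha$, or in $V_{\alpha'}$ for some $\alpha'\le\alpha$, whose image enters $W^{i+1}$ at the least coordinate where it differs from $W^i$. The aim is to check that the same basis-type vector cannot witness more than $d^+$ steps. This would bound the number of successor steps, and hence $|\eta|$, by a cardinal such as $(2^{d})^{+}$, independent of $\delta$. If this injection argument fails, the fallback is to prove the weaker first assertion, with $\lambda$ allowed to depend on $V$ through the finer invariants, which suffices for the subsequent uses.
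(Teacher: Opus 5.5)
\textbf{There is a genuine gap: the uniform bound, which is the whole content of the lemma, is never proved.} Steps 1 and 2 are correct, with one caveat about Step 2. You should not cite AB5 for $\cC_{\Bbbk}$ there: in the paper that property rests on abelianness, hence on \Cref{le:hgh.grnd}, hence on the very lemma being proved. All you need is that the coordinatewise union is grounded, which holds because it is a quotient of the grounded object $\bigoplus_i W^i$.

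The uniform bound lives in Step 3, which you leave as a plan that would not work as described. Freezing the coordinates $\ge\delta$ does not help, because $\delta$ is controlled only by the grounding levels of the $W^i$, and these are unbounded as the chain varies. For the same reason your fallback (letting $\lambda$ depend on $V$) gains nothing: no invariant of $V$ bounds $|\delta|$.

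More seriously, a strict step first visible at a coordinate $\gamma>\alpha$ need not be witnessed by any vector of $V_\alpha$, or of any $V_{\alpha'}$ with $\alpha'\le\alpha$. Take $\alpha=0$, $V_\beta=\Bbbk_\beta^2$ with the obvious transition maps, $t\in\Bbbk_\gamma\setminus\Bbbk_0$, and $W_\beta=\Bbbk_\beta\cdot(1,t)$ for $\beta\ge\gamma$, $W_\beta=0$ otherwise. Then $0\subsetneq W$, with the difference appearing at $\gamma$, yet $\nu_{\gamma0}^{-1}(W_\gamma)=0$. The paper's brief proof reduces to chains of subspaces of $V_\alpha$ through exactly such preimages $\nu_{\beta\alpha}^{-1}$. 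So this example also shows that its displayed implication is not literally right, and it cannot simply be borrowed. Witnesses have to be taken in $\Bbbk_\gamma\otimes_{\Bbbk_\alpha}V_\alpha$. Then witnesses of different steps live over different fields, so any bound obtained by counting them (your $(2^d)^+$) depends on the sizes of the fields, not on $d$.

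What is missing is a way to compare witnesses from different coordinates inside one space of dimension $\dim V_\alpha$. Here is one way to do it.

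For each step $i\mapsto i+1$ choose $\beta_i$ and $u_i\in W^{i+1}_{\beta_i}\setminus W^i_{\beta_i}$. If $j<i$ and $\beta_j\le\beta_i$, then $\nu_{\beta_i\beta_j}u_j\in W^{j+1}_{\beta_i}\subseteq W^i_{\beta_i}$, while $u_i\notin W^i_{\beta_i}$. In particular, for fixed $\beta<\alpha$ the $u_i$ with $\beta_i=\beta$ are linearly independent, so at most $\sum_{\beta<\alpha}\dim V_\beta$ steps have $\beta_i<\alpha$.

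For $\beta_i\ge\alpha$, lift $u_i$ to $\tilde u_i\in\Bbbk_{\beta_i}\otimes_{\Bbbk_\alpha}V_\alpha$ along the surjection $\pi_{\beta_i}$ onto $V_{\beta_i}$ supplied by groundedness. Set $U_i:=\pi_{\beta_i}^{-1}(W^i_{\beta_i})$. The observation above says that $\tilde u_i\notin U_i$, but $\tilde u_j\in U_i$ whenever $j<i$ and $\beta_j\le\beta_i$.

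Suppose there were $\kappa:=(\dim V_\alpha+\aleph_0)^+$ steps with $\beta_i\ge\alpha$. Among the first $\kappa$ of them, consider the indices $j$ with $\beta_j=\min\{\beta_k: k\ge j\}$, where $k$ ranges over those same steps. These form a cofinal, hence size-$\kappa$, set $R$ on which $j\mapsto\beta_j$ is non-decreasing.

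Now base-change to $\Omega:=\Bbbk_\Delta$ with $\Delta\ge\sup_i\beta_i$. Use the descent identity $(\Omega\otimes_K U)\cap(K\otimes_{\Bbbk_\alpha}V_\alpha)=U$ for subspaces $U$ over an intermediate field $K$. It gives that each $\tilde u_i$, $i\in R$, lies outside $\Omega U_i$, which contains all earlier $\tilde u_j$, $j\in R$. This produces $\kappa$ linearly independent vectors in the $\Omega$-space $\Omega\otimes_{\Bbbk_\alpha}V_\alpha$ of dimension $\dim V_\alpha<\kappa$, a contradiction.

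The resulting bound depends only on $\alpha$ and the $\dim V_{\alpha'}$, $\alpha'\le\alpha$. It uses neither your Step 2 nor the groundedness of the $W^i$.
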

\begin{proof}
  Assume $V$ $\alpha$-grounded. Given subobjects $W'\le W\le V$, we then have
  \begin{equation*}
    \forall\left(\beta>\alpha\right)
    \left(
      W_{\beta}/W'_{\beta}\ne 0
      \xRightarrow{\quad}
      \nu^{-1}_{\beta\alpha}\left(W_{\beta}/W'_{\beta}\right)
      \ne 0
    \right).
  \end{equation*}
  As any strict ascending chain of subspaces of $V_{\alpha}$ will of course be bounded by that space's dimension, any sufficiently long ascending chain of subobjects eventually stabilizes in cardinals $\ge \alpha$. This then forces an upper size bound dependent only on the dimensions of $V_{\alpha'}$, $\alpha'\le \alpha$.
\end{proof}

\begin{remark}\label{re:trnc.groth}
  In the process of constructing $\cC_{{\Bbbk}}$ \cite[Definition 2.2]{MR4092830} also introduces what we will here denote by $\cC_{{\Bbbk},\alpha}$: the full  subcategory consisting of the $\alpha$-grounded objects. \cite[Definition 2.2]{MR4092830} claims in passing that the $\cC_{{\Bbbk},\alpha}$ are all Grothendieck categories; I believe this cannot be the case: \Cref{le:trnc.chn.ess}. 
\end{remark}

\begin{lemma}\label{le:trnc.chn.ess}
  Fix ${\Bbbk}$ and an ordinal $\alpha$. Assuming $\cC_{{\Bbbk},\alpha}$ abelian, it admits proper-class-indexed chains of proper essential embeddings.

  In particular, $\cC_{{\Bbbk},\alpha}$ cannot be a Grothendieck category. 
\end{lemma}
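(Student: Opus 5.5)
The plan is to build the chain from objects that all agree at and below $\alpha$ and differ only at large indices. The point is that in $\cC_{\Bbbk,\alpha}$ a morphism can fail to be injective at high indices and still be a monomorphism.

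First I will describe kernels and pullbacks in $\cC_{\Bbbk,\alpha}$. The inclusion $\cC_{\Bbbk,\alpha}\subset\ol{\cC}_{\Bbbk}$ has a right adjoint, the $\alpha$-grounded coreflection. It sends $(K_\gamma)$ to the sub-object generated by $K_\alpha$, namely
\begin{equation*}
  \gamma\le\alpha\ \longmapsto\ K_\gamma,
  \qquad
  \gamma>\alpha\ \longmapsto\ \Bbbk_\gamma\nu_{\gamma\alpha}(K_\alpha).
\end{equation*}
Hence kernels and pullbacks in $\cC_{\Bbbk,\alpha}$ are the coreflections of the naive ones. Two consequences follow.

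\begin{itemize}[wide]
\item A morphism $f:W\to V$ of $\alpha$-grounded objects is a monomorphism in $\cC_{\Bbbk,\alpha}$ as soon as $f_\gamma$ is injective for all $\gamma\le\alpha$. Indeed, its naive kernel vanishes at indices $\le\alpha$, so its coreflection is $0$.
\item A nonzero $\alpha$-grounded object has a nonzero component at some index $\le\alpha$.
\end{itemize}

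Next, for each ordinal $\beta>\alpha$, I will define an $\alpha$-grounded object $E^\beta$ as follows:
\begin{itemize}[wide]
\item $E^\beta_\gamma=\Bbbk_\gamma^{2}$ for $\gamma<\beta$, with the obvious scalar-extension transition maps;
\item $E^\beta_\gamma=\Bbbk_\gamma$ for $\gamma\ge\beta$;
\item the transitions from indices $<\beta$ into indices $\ge\beta$ are the scalar extensions of the first-coordinate projection.
\end{itemize}
This object is $\alpha$-grounded because every component is spanned by the image of $E^\beta_\alpha=\Bbbk_\alpha^2$.

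For $\beta<\beta'$ there is a morphism $\iota:E^{\beta'}\to E^\beta$. It is the identity at indices $<\beta$, the projection at indices in $[\beta,\beta')$, and the identity at indices $\ge\beta'$. Compatibility with the transition maps is immediate, and the maps compose correctly, so the $E^\beta$ form a proper-class-indexed chain.

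I then check three properties of $\iota$.
\begin{enumerate}[(a),wide]
\item \emph{Monomorphism.} $\iota$ is the identity at all indices $\le\alpha<\beta$, so by the first observation it is a monomorphism in $\cC_{\Bbbk,\alpha}$.
\item \emph{Proper.} $\iota$ is not an isomorphism. The category is a full subcategory of $\ol{\cC}_{\Bbbk}$, where isomorphisms are componentwise, and $\iota$ is not injective at index $\beta$. Under the abelianness hypothesis, this also means $\iota$ is not an epimorphism, so the subobject is proper.
\item \emph{Essential.} Let $S\le E^\beta$ be a nonzero subobject. By the second observation, $S_{\gamma}\ne0$ for some $\gamma\le\alpha$. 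The naive pullback of $S$ along $\iota$ agrees with $S$ at that index, since $\iota_\gamma$ is the identity there. Its coreflection is therefore nonzero, so $S$ meets $E^{\beta'}$ nontrivially.
\end{enumerate}
Because pullbacks compute intersections of subobjects in an abelian category, (c) is exactly what essentiality requires.

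Finally, a Grothendieck category has injective envelopes, and every essential extension of an object embeds into its envelope. In particular, the subobject lattice of an envelope is a set. In our chain, any fixed $E^{\beta_0}$ has a proper class of pairwise non-isomorphic essential extensions $E^\beta$ with $\beta<\beta_0$, all sitting above it. Alternatively, reading the chain in the other direction, it yields a proper class of distinct subobjects of a single object, contradicting well-poweredness (\Cref{re:no.gen.no.wp}).

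The main obstacle I expect is this last step: orienting the chain correctly relative to the envelope argument. It has to be checked carefully that the distinct $E^\beta$ give genuinely distinct subobjects of one fixed object, rather than merely isomorphic ones. I would handle this via the well-poweredness route, viewing all $E^{\beta'}$ with $\beta'>\beta$ as subobjects of $E^\beta$ and distinguishing them by the index at which the naive map $\iota$ first fails to be injective.
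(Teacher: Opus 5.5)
Your argument is correct, but it takes a genuinely different route from the paper's.

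\textbf{The paper's route.} It uses an \emph{ascending} chain $X^{0}\le X^{1}\le\cdots$, obtained by enlarging the field in the low-index components. Its inclusions are componentwise injective, hence monic even in $\cC_{\Bbbk}$. Non-Grothendieckness then comes from Freyd's observation that Grothendieck categories admit no proper-class strictly increasing chains of essential extensions.

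\textbf{Your route.} You build a \emph{descending} chain whose monomorphisms are \emph{not} componentwise injective. This exploits exactly the pathology of \Cref{ex:ker.grnd}: kernels and pullbacks in $\cC_{\Bbbk,\alpha}$ are coreflections of the naive ones. You verify essentiality against those coreflected pullbacks and conclude via well-poweredness.

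\textbf{What each buys.} The paper's choice buys reusability. Its chain is essential in $\cC_{\Bbbk}$ and feeds \Cref{th:is.ab.no.env}. Your $\iota$, by contrast, has a non-zero grounded kernel in $\cC_{\Bbbk}$ and is not monic there. Your choice buys an honest essentiality check inside $\cC_{\Bbbk,\alpha}$, which for the paper's inclusions is not automatic. For instance, in $\cC_{\Bbbk,0}$, with $\alpha<\alpha'$, the subobject of $X^{\alpha'}$ generated at index $0$ by some $x\in\Bbbk_{\alpha'}\setminus\Bbbk_{\alpha}$ has zero coreflected intersection with $X^{\alpha}$.

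\textbf{A shortcut you have overlooked.} Your $\iota$ is componentwise surjective, hence epic in $\cC_{\Bbbk,\alpha}$. Combined with (a) and (b), it is a non-invertible bimorphism. Three consequences follow.
\begin{itemize}
\item The abelianness hypothesis is outright inconsistent.
\item Your inference in (b) that $\iota$ is not epic holds only vacuously. Properness should simply be ``not an isomorphism''.
\item You actually have a one-line proof of \Cref{cor:trnc.not.ab}, hence of non-Grothendieckness, needing neither essentiality nor well-poweredness.
\end{itemize}

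\textbf{Two slips in your last paragraph.}
\begin{itemize}
\item The envelope version fails as written, since only a set of $\beta$ lie below $\beta_0$.
\item The index where $E^{\beta'}\to E^{\beta}$ first fails to be injective is always $\beta$, so it separates nothing.
\end{itemize}
Distinctness instead follows from $E^{\beta'}\not\cong E^{\beta''}$ for $\beta'<\beta''$: their components at index $\beta'$ have dimensions $1$ and $2$. With that fix, the well-poweredness route you settle on goes through.
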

\begin{proof}
  We focus on $\alpha:=0$, as the example already resides in that smallest of the $\cC_{{\Bbbk},\alpha}$. The sought-for chain consists of the objects
  \begin{equation*}
    X^{\alpha}
    =
    \left(X^{\alpha}_{\beta}\right)_{\beta}
    ,\quad
    X^{\alpha}_{\beta}
    :=
    \begin{cases}
      \Bbbk_{\alpha}&\beta\le \alpha\\
      \Bbbk_{\beta}&\beta>\alpha
    \end{cases}
  \end{equation*}
  with the obvious transition morphisms for each individual $X^{\alpha}$ (each $\Bbbk_{\beta}$ embeds in larger $\Bbbk_{\beta'}$ as in the initial datum ${\Bbbk}$) and the self-same transition morphisms also effecting the inclusions $X^{\alpha}\le X^{\alpha'}$ whenever $\alpha\le \alpha'$.
  
  As to the second claim: Grothendieck categories cannot house proper-class-long strictly increasing chains of essential extensions, as verified in the course of the proof of \cite[Theorem 6.25]{freyd_abcats}. 
\end{proof}

Consequently:

\begin{corollary}\label{cor:trnc.not.ab}
The truncated categories $\cC_{\Bbbk,\alpha}$ are not abelian. 
\end{corollary}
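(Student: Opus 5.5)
The plan is to argue by contradiction. Assume $\cC_{\Bbbk,\alpha}$ is abelian. I will show that it must then be a Grothendieck category, which contradicts the second half of \Cref{le:trnc.chn.ess}. Two things are needed: a generator, and exactness of filtered colimits. Cocompleteness and pre-abelianness are already known from the discussion preceding \Cref{re:trnc.groth}.

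\emph{Colimits and the generator.} First I will check that colimits in $\cC_{\Bbbk,\alpha}$ are computed levelwise, as in $\ol{\cC}_{\Bbbk}$. This works because an $\alpha$-grounded object is a levelwise quotient of a direct sum of $\alpha$-grounded objects, and the spanning condition $V_\beta=\Bbbk_\beta\nu_{\beta\alpha}(V_\alpha)$ survives levelwise direct sums and surjections.

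For each $\gamma\le\alpha$ let $P^\gamma$ be the object that is $0$ below $\gamma$ and $\Bbbk_\beta$ at $\beta\ge\gamma$, with the field inclusions as transition maps. This is the ``free object on one generator at level $\gamma$'' (cf. the $M^\gamma$ of \Cref{re:no.gen.no.wp}), and it is $\alpha$-grounded. A morphism $P^\gamma\to V$ is the same as a vector in $V_\gamma$.

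Given any $V\in\cC_{\Bbbk,\alpha}$, take a sum of copies of the $P^\gamma$, $\gamma\le\alpha$, indexed by vectors of the $V_\gamma$. The induced map to $V$ is surjective at every level $\le\alpha$ by construction. It is also surjective above $\alpha$, because $V_\beta$ is spanned over $\Bbbk_\beta$ by $\nu_{\beta\alpha}(V_\alpha)$. So $\bigoplus_{\gamma\le\alpha}P^\gamma$, a set-indexed sum, is a generator.

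\emph{AB5.} In a cocomplete abelian category, AB5 is equivalent to filtered colimits preserving monomorphisms. The key observation is that a morphism $f\colon V\to W$ in $\cC_{\Bbbk,\alpha}$ is monic if and only if $f_\gamma$ is injective for all $\gamma\le\alpha$.

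\begin{itemize}
\item If $f_\gamma$ is injective for all $\gamma\le\alpha$ and $g\colon U\to V$ satisfies $fg=0$, then $g$ vanishes at all levels $\le\alpha$. By groundedness of $U$ it then vanishes everywhere.
\item Conversely, a nonzero $v\in\ker f_\gamma$ with $\gamma\le\alpha$ gives a nonzero map $P^\gamma\to V$ killed by $f$.
\end{itemize}

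Since filtered colimits are levelwise, and filtered colimits of vector spaces are exact, injectivity at levels $\le\alpha$ is preserved. Hence filtered colimits preserve monos, and $\cC_{\Bbbk,\alpha}$ is AB5. With the generator, it is Grothendieck, contradicting \Cref{le:trnc.chn.ess}.

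The step I expect to need the most care is the mono characterization. It is exactly where the $\alpha$-grounded kernel differs from the naive one: a mono need not be injective above $\alpha$. This is why the argument tests monicity against the $P^\gamma$ rather than against levelwise kernels. If the equivalence ``AB5 $\iff$ filtered colimits preserve monos'' is felt to need justification here, I would instead verify directly that the grounded kernel, which is the coreflection of the naive kernel, commutes with levelwise filtered colimits. That coreflection is built from the levels $\le\alpha$ by taking $\Bbbk_\beta$-spans of images, and spans and images commute with filtered colimits.
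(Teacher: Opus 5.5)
\textbf{Gap in the final step (shared with the paper).} Your route is the paper's own: assume abelianness, get a generator from the $P^{\gamma}=M^{\gamma}$, $\gamma\le\alpha$, prove AB5, and collide with \Cref{le:trnc.chn.ess}. Your checks of levelwise colimits, the generator, the monomorphism criterion and AB5 are correct. They are also more explicit than the paper, which defers AB5 to Rickard.

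The weak link is the final step, which you inherit from the paper. The chain $X^{\beta}$ of \Cref{le:trnc.chn.ess} is essential if intersections are computed levelwise. It is \emph{not} essential in $\cC_{\Bbbk,0}$, and your own criterion shows this:
\begin{itemize}
\item For $\beta<\beta'$ pick $t\in\Bbbk_{\beta'}\setminus\Bbbk_{\beta}$. Let $Y\le X^{\beta'}$ be the $0$-grounded subobject with $Y_{\gamma}=\Bbbk_{\gamma}t$.
\item The cokernel $q\colon X^{\beta'}\to X^{\beta'}/Y$ kills $0\ne t\in X^{\beta'}_0$, so it is not monic.
\item Yet $q|_{X^{\beta}}$ is injective in degree $0$, because $\Bbbk_0t\cap\Bbbk_{\beta}=0$. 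Hence it is monic.
\end{itemize}
Equivalently, the pullback of $Y$ and $X^{\beta}$ in $\cC_{\Bbbk,0}$ (the grounded coreflection of the levelwise one) is $0$. So the lemma's chain does not witness a failure of the Grothendieck property, and the contradiction you invoke is not actually established.

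\textbf{Direct repair.} The fix is immediate and makes the generator and AB5 work unnecessary. By your criterion, a morphism of $\cC_{\Bbbk,\alpha}$ that is bijective in all degrees $\le\alpha$ is monic; if it is also levelwise surjective, it is epic. Take the morphism of \Cref{ex:ker.grnd}, shifted to start in degree $\alpha$:
\begin{itemize}
\item $V_{\gamma}=\Bbbk_{\gamma}^2$ for $\gamma\ge\alpha$;
\item $W_{\alpha}=\Bbbk_{\alpha}^2$ and $W_{\gamma}=\Bbbk_{\gamma}^2/\Bbbk_{\gamma}(1,-t)$ for $\gamma>\alpha$, with $t\in\Bbbk_{\alpha+1}\setminus\Bbbk_{\alpha}$;
\item both objects are $0$ below $\alpha$.
\end{itemize}
This morphism is the identity in degrees $\le\alpha$ and surjective everywhere, but has nonzero kernel in degree $\alpha+1$. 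It is therefore monic and epic without being an isomorphism. Since abelian categories are balanced, $\cC_{\Bbbk,\alpha}$ is not abelian. You had already spotted the key phenomenon (a mono need not be injective above $\alpha$) but did not draw this conclusion from it.
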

\begin{proof}
  That the $M^{\beta}$, $\beta\le \alpha$ of \Cref{re:no.gen.no.wp} constitute a set of generators is noted in \cite[Remark 2.6]{MR4092830}, and the AB5 condition (assuming abelianness) is essentially what the proof of \cite[Theorem 2.4]{MR4092830} argues. 
\end{proof}

The issue at the heart of \Cref{cor:trnc.not.ab} appears to also affect the incipient phase of the proof of \cite[Theorem 2.4]{MR4092830} (and is indeed easily checked; note however that the selfsame remark also claims that $\cC_{\Bbbk,\alpha}$ are Grothendieck): specifically, that proof claims early on without justification that (co)kernels of morphisms between $\alpha$-grounded objects are again such. This is unproblematic for \emph{co}kernels, as these are certainly compatible in any conceivable sense with the surjectivity that effectively powers grounded-ness, but kernels require some care.

\begin{example}\label{ex:ker.grnd}
  We will simply focus on objects of $\cC_{{\Bbbk},0}$ truncated over indices 0,1, as all such easily extend to full 0-grounded objects. Consider a commutative diagram
  \begin{equation*}
    \begin{tikzpicture}[>=stealth,auto,baseline=(current  bounding  box.center)]
      \path[anchor=base] 
      (0,0) node (l) {$V_0=W_0$}
      +(2,.5) node (u) {$V_1$}
      +(4,0) node (r) {$W_1$,}
      ;
      \draw[right hook->] (l) to[bend left=6] node[pos=.5,auto] {$\scriptstyle \text{$\Bbbk_0$-linear}$} (u);
      \draw[->>] (u) to[bend left=6] node[pos=.5,auto] {$\scriptstyle \text{$\Bbbk_1$-linear}$} (r);
      \draw[right hook->] (l) to[bend right=6] node[pos=.5,auto,swap] {$\scriptstyle \text{$\Bbbk_0$-linear}$} (r);
    \end{tikzpicture}
  \end{equation*}
  with $V_0$ generating $V_1$ over $\Bbbk_1$ (so that the left-hand up and bottom maps constitute truncated 0-grounded objects and the diagram is a morphism of such), and the right-hand upper map is a quotient by some $\Bbbk_1$-subspace meeting $V_0=W_0\le V_1$ trivially. Plainly, the kernel is \emph{not} 0-grounded. 
\end{example}

\Cref{ex:ker.grnd} raises some doubts over the validity of \cite[Theorem 2.4]{MR4092830} given that (as noted) the claim that kernels of $\alpha$-grounded objects are again such explicitly features in its proof. The abelianness of $\cC_{\Bbbk}$ itself, however, will follow once we relax that constraint to allowing kernels ``higher-degree grounded-ness''.

\begin{lemma}\label{le:hgh.grnd}
  The kernel of a morphism of grounded objects is again grounded. 
\end{lemma}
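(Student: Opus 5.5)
Let $f:V\to W$ be a morphism with $V$ and $W$ both $\alpha$-grounded; since $\alpha$-grounded implies $\alpha'$-grounded for $\alpha'\ge\alpha$, a single $\alpha$ serves for both. Let $K=(K_\beta)_\beta$, $K_\beta:=\ker f_\beta$, be the naive kernel in $\ol{\cC}_{\Bbbk}$. The goal is an ordinal $\gamma\ge\alpha$ with $K$ $\gamma$-grounded. The idea is that \Cref{ex:ker.grnd} shows the defect can appear at finitely many stages, but it cannot keep growing forever, because the relevant subspaces are governed by the set-sized space $V_\alpha$.

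\textbf{Setup.} For $\beta\ge\alpha$, let $U_\beta:=\nu_{\beta\alpha}^{-1}(K_\beta)\le V_\alpha$. This is the $\Bbbk_\alpha$-subspace of elements of $V_\alpha$ that are killed by $f$ once pushed to stage $\beta$. Naturality $f_{\beta'}\nu_{\beta'\beta}=\nu_{\beta'\beta}f_\beta$ gives $\nu_{\beta'\beta}K_\beta\le K_{\beta'}$, so $(U_\beta)_{\beta\ge\alpha}$ is an ascending chain of subspaces of the set $V_\alpha$ indexed by the proper class $\cat{ON}$. Hence it stabilizes: there is a $\gamma$ with $U_\beta=U_\gamma=:U$ for all $\beta\ge\gamma$. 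Either argue directly that a strictly increasing chain of subsets of $V_\alpha$ has length bounded by $|V_\alpha|^+$, or invoke the reasoning of \Cref{le:lcc}.

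\textbf{Main step.} Show that $K_\beta=\Bbbk_\beta\nu_{\beta\gamma}(K_\gamma)$ for all $\beta>\gamma$. The inclusion $\supseteq$ is immediate. For $\subseteq$, describe $K_\beta$ through $V_\alpha$. Choose a $\Bbbk_\alpha$-basis of $V_\alpha$ extending a basis of $U$, and let $C$ be the span of the complementary basis vectors. Since $V_\beta=\Bbbk_\beta\nu_{\beta\alpha}V_\alpha$, we have
\begin{equation*}
  V_\beta=\Bbbk_\beta\nu_{\beta\alpha}U+\Bbbk_\beta\nu_{\beta\alpha}C,
\end{equation*}
and the first summand lies in $K_\beta$. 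It therefore suffices to show
\begin{equation*}
  K_\beta\cap\Bbbk_\beta\nu_{\beta\alpha}C\subseteq \Bbbk_\beta\nu_{\beta\gamma}K_\gamma ,
\end{equation*}
and the cleanest route is to show this intersection is $0$.

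\textbf{Main obstacle.} The difficulty is the one behind \Cref{ex:ker.grnd}: a $\Bbbk_\beta$-linear combination of images of $C$ can lie in the kernel even though no $\Bbbk_\alpha$-rational element of $C$ does. Stabilization of $U$ controls only kernel elements coming from $V_\alpha$ itself, not $\Bbbk_\beta$-combinations, so the field extensions have to be handled. My first attempt is to replace $U_\beta$ by a finer invariant: the set of finite tuples $(c_1,\dots,c_n)$ in $V_\alpha$ whose images at stage $\beta$ become $\Bbbk_\beta$-linearly dependent modulo $K_\beta$ (equivalently, under $f_\beta$ in $W_\beta$). This is again a subset of a fixed set, namely finite tuples in $V_\alpha$, and it is monotone in $\beta$, because dependence over $\Bbbk_\beta$ persists over $\Bbbk_{\beta'}$ after applying $\nu_{\beta'\beta}$. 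It therefore also stabilizes past some $\gamma$. With this invariant stable, one checks that any relation $\sum\lambda_i\nu_{\beta\alpha}c_i\in K_\beta$ with $\lambda_i\in\Bbbk_\beta$ arises from a relation already present at stage $\gamma$. Fix a maximal subtuple that is independent at stage $\gamma$, hence independent at every stage $\beta\ge\gamma$. Every other $c_i$ then satisfies an explicit stage-$\gamma$ relation modulo $K_\gamma$, and this lets one rewrite the element as a $\Bbbk_\beta$-combination of $\nu_{\beta\gamma}K_\gamma$. The remaining details are routine linear algebra, and they show that $K$ is $\gamma$-grounded.
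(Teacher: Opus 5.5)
Your final argument, the finite-tuple invariant, is correct. The $U_\beta$ material, however, should be deleted rather than kept as the ``Main step''. With $\gamma$ taken as the stabilization index of $U_\beta$, neither $K_\beta=\Bbbk_\beta\nu_{\beta\gamma}K_\gamma$ nor $K_\beta\cap\Bbbk_\beta\nu_{\beta\alpha}C=0$ holds. Here is a counterexample, essentially \Cref{ex:ker.grnd}:
take $V_\beta=\Bbbk_\beta^2$, and let $W$ agree with $V$ below some $\delta>\alpha$ and equal $V_\beta/\Bbbk_\beta(e_1-te_2)$ from $\delta$ on, with $t\in\Bbbk_\delta\setminus\Bbbk_\alpha$. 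Then $U_\beta=0$ for all $\beta$, so your Setup gives $\gamma=\alpha$ and $C=V_\alpha$, yet $K_\alpha=0\ne K_\delta$.

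Once $\gamma$ is redefined as the stabilization index of the tuple invariant, the ``routine linear algebra'' does close up. Write $x=\sum_i\lambda_i\nu_{\beta\alpha}c_i\in K_\beta$, and let $c_1,\dots,c_m$ be maximal independent at stage $\gamma$. The elements $k_j:=\nu_{\gamma\alpha}c_j-\sum_{i\le m}\mu_{ji}\nu_{\gamma\alpha}c_i$ for $j>m$ lie in $K_\gamma$. Then $x-\sum_{j>m}\lambda_j\nu_{\beta\gamma}k_j$ is a $\Bbbk_\beta$-combination of $\nu_{\beta\alpha}c_1,\dots,\nu_{\beta\alpha}c_m$ lying in $K_\beta$, hence $0$ by independence at stage $\beta$.

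Your route differs from the paper's. The paper takes the $\delta$-grounded coreflections $\downarrow_\delta(\ker f)$, which form an ascending $\cat{ON}$-indexed chain of grounded subobjects of $V$. It then invokes the $\lambda$-ACC of \Cref{le:lcc} to make this chain stabilize at some $\gamma$. Since $(\downarrow_\delta K)_\delta=K_\delta$, stabilization is exactly $\gamma$-groundedness of $K$. This is a one-liner that fits the paper's structural picture, since the same ACC feeds \Cref{pr:trnc.grth.tors} and \Cref{th:is.ab.no.env}.

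However, it rests entirely on \Cref{le:lcc}. The written proof of that lemma detects a strict inclusion $W'_\beta\subsetneq W_\beta$ through preimages in $V_\alpha$. That is exactly the step that fails for subspaces not defined over $\Bbbk_\alpha$, which is the obstacle you isolate. Take $W'=0\le W:=K$ in the example above: $W_\delta\ne W'_\delta$, but both preimages in $V_\alpha$ vanish.

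Your invariant lives on the set of finite tuples of $V_\alpha$ and tracks dependence over the growing fields, so it handles such kernels correctly. It is self-contained, and it uses only the groundedness of $V$; $W$ may be any object of $\ol{\cC}_{\Bbbk}$. Conceptually, its stabilization at $\gamma$ says precisely that $\Bbbk_\beta\otimes_{\Bbbk_\gamma}f(V)_\gamma\to f(V)_\beta$ is an isomorphism for $\beta\ge\gamma$. That is the second form of \Cref{le:ev.inj}, applied to the image of $f$. So your argument amounts to proving that lemma for $f(V)$ and then chasing the sequence $0\to K\to V\to f(V)\to 0$.
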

\begin{proof}
  Having chosen a $\cC_{\Bbbk}$-morphism $V\xrightarrow{f} W$, simply note that the $\alpha$-grounded truncations
  \begin{equation*}
    \downarrow_{\alpha}(\ker f)
    ,\quad
    \cC_{\Bbbk}
    \xrightarrow[\quad\substack{\text{right adjoint to inclusion}\\\text{\cite[pre Remark 2.6]{MR4092830}}}\quad]{\quad\downarrow_{\alpha}\quad}
    \cC_{\Bbbk,\alpha}
  \end{equation*}
  must stabilize by \Cref{le:lcc}.
\end{proof}

This in hand, the substance of the theorem appears to stand: $\cC_{\Bbbk}$ is abelian, despite its truncations' $\cC_{\Bbbk,\alpha}$ somewhat pathological (in retrospect) character.

\begin{theorem}\label{th:is.ab.no.env}
  The categories $\cC_{\Bbbk}$ are AB5 abelian, but do not have enough injectives. 
\end{theorem}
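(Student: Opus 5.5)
We split the argument into abelianness with AB5, and the failure of enough injectives.

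\emph{Abelianness and AB5.} We work inside the ambient $\ol{\cC}_{\Bbbk}$, which is plainly abelian, with kernels, cokernels, colimits and images computed index-wise: at each $\alpha$ we have $\Bbbk_\alpha$-spaces, and the transition maps are induced. We then check that $\cC_{\Bbbk}$ is closed under the relevant operations.
\begin{itemize}
\item Cokernels of morphisms of grounded objects are grounded, since $\Bbbk_\beta$-spanning is inherited by quotients.
\item Kernels are grounded by \Cref{le:hgh.grnd}. Note that the grounding ordinal of $\ker f$ may exceed those of source and target.
\item Arbitrary (set-indexed) direct sums of grounded objects are grounded. If $V^i$ is $\alpha_i$-grounded for $i\in I$, the sum is $\sup_i\alpha_i$-grounded; the supremum exists because $I$ is a set.
\end{itemize}
Hence $\cC_{\Bbbk}$ is a full subcategory of $\ol{\cC}_{\Bbbk}$ closed under kernels, cokernels and coproducts, computed as in the ambient category. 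It is therefore abelian and cocomplete. Filtered colimits are computed index-wise as filtered colimits of vector spaces, so their exactness in $\cC_{\Bbbk}$ follows from exactness in $\ol{\cC}_{\Bbbk}$, that is, from exactness of filtered colimits of vector spaces.

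\emph{No enough injectives.} The plan is to show that $M^0$ of \Cref{re:no.gen.no.wp}, or the 0-grounded object $X^0$ of \Cref{le:trnc.chn.ess}, admits no monomorphism into an injective. Suppose $V\hookrightarrow I$ with $I$ injective and $I$ $\gamma$-grounded. Consider the chain $X^\alpha$ of \Cref{le:trnc.chn.ess}, where $X^0\le X^{\alpha}$ for all $\alpha$. These are essential extensions in $\cC_{\Bbbk}$ as well: any nonzero subobject of $X^\alpha$ meets $X^0$ from some index on, since $\Bbbk_\beta\subseteq X^0_\beta$ for $\beta>\alpha$.

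By injectivity, the embedding $X^0\hookrightarrow I$ extends along each essential extension $X^0\le X^\alpha$ to a map $X^\alpha\to I$. By essentiality, each such extension is again a monomorphism. So every $X^\alpha$ embeds in $I$, compatibly with $X^0$.

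Now take $\alpha>\gamma$. The image of $X^\alpha_0=\Bbbk_\alpha$ in $I_0$ is a $\Bbbk_0$-subspace whose dimension is at least $[\Bbbk_\alpha:\Bbbk_0]$, since the component at index 0 is injective. That degree grows without bound as $\alpha$ ranges over $\cat{ON}$. Meanwhile $\dim I_0$ is a fixed cardinal. Choosing $\alpha$ so that $[\Bbbk_\alpha:\Bbbk_0]>\dim I_0$ gives a contradiction.

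This avoids \Cref{le:when.ess} entirely. The one thing to make sure of is that the degrees $[\Bbbk_\alpha:\Bbbk_0]$ really are unbounded. This holds because $\alpha\mapsto\Bbbk_\alpha$ is a strictly increasing proper-class chain of fields, so their cardinalities cannot stay below any fixed cardinal.

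The main obstacle is the abelianness bookkeeping. In particular, one must check that kernels, images and coimages computed in $\ol{\cC}_{\Bbbk}$ are genuinely the categorical ones in $\cC_{\Bbbk}$. This amounts to closure of $\cC_{\Bbbk}$ under kernels, which is exactly the delicate point flagged in \Cref{ex:ker.grnd}, now handled by \Cref{le:hgh.grnd}. Once closure is known, the canonical coimage-to-image map is an isomorphism index-wise and hence in $\cC_{\Bbbk}$.
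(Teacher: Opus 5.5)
Your proof is correct. The abelianness/AB5 half is the paper's argument. The only non-formal input is that kernels formed in $\ol{\cC}_{\Bbbk}$ stay grounded (\Cref{le:hgh.grnd}). You also write out the cokernel, coproduct and filtered-colimit bookkeeping that the paper leaves to Rickard's proof.

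For the failure of enough injectives you take a different and more elementary route. The paper extends $X^0\hookrightarrow I$ transfinitely to a \emph{compatible} family of embeddings $X^{\alpha}\hookrightarrow I$. At a limit $\lambda$ this means extending once more from $\bigcup_{\alpha<\lambda}X^{\alpha}$ to $X^{\lambda}$, which is generally strictly larger because $\Bbbk$ need not be continuous. It then has a proper-class strictly ascending chain of subobjects of $I$, contradicting the $\lambda$-ACC of \Cref{le:lcc}.

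You instead extend along each essential inclusion $X^0\le X^{\alpha}$ separately, so no compatibility is needed. The contradiction is read off at the single index $0$: $\Bbbk_{\alpha}=X^{\alpha}_0\hookrightarrow I_0$ forces $\dim_{\Bbbk_0}I_0\ge[\Bbbk_{\alpha}:\Bbbk_0]$. These degrees are unbounded: a strictly increasing $\cat{ON}$-chain has $|\Bbbk_{\gamma}|\ge|\gamma|$, and bounded degree over $\Bbbk_0$ would bound $|\Bbbk_{\gamma}|$.

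This removes \Cref{le:lcc} and the transfinite construction from this step, but not from the proof as a whole. You still need \Cref{le:hgh.grnd}, whose proof rests on \Cref{le:lcc}, to know that monomorphisms in $\cC_{\Bbbk}$ are index-wise injective. In exchange, the paper's version isolates the abstract mechanism, in the spirit of Freyd's argument: a proper-class essential chain cannot sit inside an object whose subobject chains have bounded length. It reuses the same template for $T^{\alpha}$ in \Cref{th:inj.event.vnsh}; your dimension count would adapt there too, by counting at index $\alpha$.

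Two minor points. The grounding ordinal $\gamma$ of $I$ plays no role in your argument. Your essentiality check tacitly uses that the transition maps of $X^{\alpha}$ are injective, so a non-zero subobject is non-zero at every index $>\alpha$, where $X^{\alpha}$ and $X^0$ agree.
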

\begin{proof}
  Take abelianness for granted for the moment (thus confirming \cite[Theorem 2.4]{MR4092830} and hence also the AB5 property). The base $X^0$ of an ordinal-indexed essential chain $(X^{\alpha})_{\alpha}$ such as that provided by \Cref{le:trnc.chn.ess} cannot embed into an injective: transfinite induction would extend any embedding $X^0\lhook\joinrel\to I$ to $X^{\alpha}\lhook\joinrel\to I$ (again an embedding because $X^0\le X^{\alpha}$ is essential \cite[Lemmas 6.21, 6.22]{freyd_abcats}), whence a violation of \Cref{le:lcc} (which imposes eventual stabilization of any ordinal-indexed ascending chain of subobjects of $I$). 

  As to abelianness, the only missing ingredient is verifying that the kernel of a grounded morphism is again grounded; this being precisely what \Cref{le:hgh.grnd} ensures, the conclusion is confirmed. 
\end{proof}

\begin{remark}\label{re:other.ab5.no.inj}
  An AB5 category with no injective hulls (and indeed, no non-zero injective/projective objects) is also constructed in \cite[Exercise 6A]{freyd_abcats}. It \emph{is} well-powered (by contrast to $\cC_{\Bbbk}$), but of course still lacks a generator. It is also easily checked to be complete, so it will not do as a substitute for the original problem of retaining AB5 while avoiding AB3$^*$. 
\end{remark}

Unlike the categories mentioned in \Cref{re:other.ab5.no.inj}, $\cC_{\Bbbk}$ \emph{does} have non-zero injectives: those of certain Grothendieck subcategories of $\cC_{\Bbbk}$, as the sequel describes.

\begin{notation}\label{not:alpha.tors.thrs}
  For ordinals $\alpha$ set
  \begin{equation*}
    \begin{aligned}
      \cC_{\Bbbk}|_{\alpha}
      :=
      \left\{\left(V_{\beta}\right)_{\beta}\in \cC_{\Bbbk}\ :\ \forall\left(\beta\ge \alpha\right)\left(V_{\beta}=\{0\}\right)\right\}
      \overset{\text{full}}
      {\subseteq}
      \cC_{\Bbbk}
      ,\\
      \cC_{\Bbbk}|^{\alpha}
      :=
      \left\{\left(V_{\beta}\right)_{\beta}\in \cC_{\Bbbk}\ :\ \forall\left(\beta< \alpha\right)\left(V_{\beta}=\{0\}\right)\right\}
      \overset{\text{full}}
      {\subseteq}
      \cC_{\Bbbk};
    \end{aligned}    
  \end{equation*}
  the same conventions apply with $\ol{\cC}$, should the occasion arise. For obvious reasons, the objects of $\cC_{\Bbbk}|_{\bullet}$ will occasionally be referred to as \emph{eventually vanishing}.
\end{notation}

Some general remarks on the truncated categories just introduced follow. 

\begin{proposition}\label{pr:trnc.grth.tors}
  For every $\alpha\in \cat{ON}$:
  \begin{enumerate}[(1),wide]
  \item\label{item:pr:trnc.grth.tors:ex.groth} The subcategory $\cC_{\Bbbk}|_{\alpha}\subseteq \cC_{\Bbbk}$ is exact and Grothendieck.

  \item\label{item:pr:trnc.grth.tors:2ca} $\left(\cC_{\Bbbk}|^{\alpha},\cC_{\Bbbk}|_{\alpha}\right)$ is a \emph{hereditary torsion theory} \cite[\S 4.8]{pop_ab-cat} for $\cC_{\Bbbk}$.

  \item\label{item:pr:trnc.grth.tors:1ca} Dually, $\cC_{\Bbbk}|_{\alpha}$ are all hereditary torsion subcategories of $\cC_{\Bbbk}$, as is the subcategory $\cC_{\Bbbk}|_{\circ}:=\bigcup_{\alpha}\cC_{\Bbbk}|_{\alpha}$ of eventually vanishing objects. 
  \end{enumerate}
\end{proposition}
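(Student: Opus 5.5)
The plan is to work componentwise throughout. By \Cref{le:hgh.grnd} and the discussion preceding it, kernels and cokernels in $\cC_{\Bbbk}$ are computed index by index, as in $\ol{\cC}_{\Bbbk}$. First I will check that the relevant subcategories are genuinely inside $\cC_{\Bbbk}$.
\begin{itemize}[wide]
\item Every object of $\cC_{\Bbbk}|_{\alpha}$ is $\alpha$-grounded, trivially.
\item For $V\in \cC_{\Bbbk}$, say $\gamma$-grounded, the truncations $V_{\ge\alpha}$ (zero below $\alpha$, as in the proof of \Cref{le:when.ess}) and $V_{<\alpha}:=V/V_{\ge\alpha}$ are grounded. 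The first is $\max(\alpha,\gamma)$-grounded, since $\Bbbk_{\beta}\nu_{\beta\gamma}V_{\gamma}=\Bbbk_{\beta}\nu_{\beta\alpha}(\Bbbk_{\alpha}\nu_{\alpha\gamma}V_{\gamma})$ for $\beta>\alpha>\gamma$. The second is $\alpha$-grounded.
\end{itemize}

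For \Cref{item:pr:trnc.grth.tors:ex.groth}, $\cC_{\Bbbk}|_{\alpha}$ is closed under componentwise kernels, cokernels and extensions, so it is an exact abelian subcategory. For the Grothendieck property I will identify $\cC_{\Bbbk}|_{\alpha}$ with the category of ``semilinear representations'' of the set-sized poset $\{\beta<\alpha\}$. Concretely, this is the category of modules over the twisted incidence ring $\bigoplus_{\beta\le\beta'<\alpha}\Bbbk_{\beta'}$, with multiplication induced by the inclusions $\Bbbk_{\beta}\le\Bbbk_{\beta'}$. Colimits there are computed componentwise, so it is AB5. The truncations $(M^{\beta})_{<\alpha}$, $\beta<\alpha$, of the objects from \Cref{re:no.gen.no.wp} form a set of generators: an element of $V_{\beta}$ is hit by a map from $(M^{\beta})_{<\alpha}$ sending $1\mapsto v$.

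For \Cref{item:pr:trnc.grth.tors:2ca}, three facts are needed:
\begin{itemize}[wide]
\item $\cC_{\Bbbk}(T,F)=0$ for $T\in\cC_{\Bbbk}|^{\alpha}$ and $F\in \cC_{\Bbbk}|_{\alpha}$, because at every index one of the two components vanishes.
\item Every $V$ sits in the exact sequence $0\to V_{\ge\alpha}\to V\to V_{<\alpha}\to 0$ with its ends in the two classes.
\item Heredity holds: subobjects of an object vanishing below $\alpha$ still vanish below $\alpha$, because subobjects are componentwise.
\end{itemize}
These give a hereditary torsion theory in the sense of \cite[\S 4.8]{pop_ab-cat}.

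For \Cref{item:pr:trnc.grth.tors:1ca}, I will define the torsion radical explicitly.
\begin{itemize}[wide]
\item For $\cC_{\Bbbk}|_{\alpha}$, set $t_{\alpha}(V)_{\beta}:=\ker\nu_{\alpha\beta}$ for $\beta<\alpha$ and $0$ for $\beta\ge\alpha$. This is a subobject, since $\nu_{\beta'\beta}$ maps $\ker\nu_{\alpha\beta}$ into $\ker\nu_{\alpha\beta'}$. It is the largest subobject lying in $\cC_{\Bbbk}|_{\alpha}$, and every morphism from an object of $\cC_{\Bbbk}|_{\alpha}$ into $V$ lands in it.
\item For the eventually vanishing class, set $t(V)_{\beta}:=\bigcup_{\gamma}\ker\nu_{\gamma\beta}$.
\end{itemize}
Closure under subobjects, quotients and extensions is immediate componentwise. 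Idempotence, $t(V/tV)=0$, is then checked index by index.

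The main obstacle is closure under coproducts. Coproducts in $\cC_{\Bbbk}$ need not be componentwise, because a componentwise sum of objects with unbounded vanishing thresholds is not grounded.
\begin{itemize}[wide]
\item For $\cC_{\Bbbk}|_{\alpha}$ there is no difficulty: a sum of objects of $\cC_{\Bbbk}|_{\alpha}$ taken componentwise is $\alpha$-grounded, so it is the coproduct in $\cC_{\Bbbk}$.
\item For $\cC_{\Bbbk}|_{\circ}$ I would try to show that the coproduct $\coprod X_i$ of eventually vanishing $X_i$ is the universal grounded quotient of the componentwise sum. Then I would prove that it is eventually vanishing by exhibiting it as a directed union (exact by AB5) of the finite, hence bounded, subsums. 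Each such subsum lies in some $\cC_{\Bbbk}|_{\alpha}$. Since $t$ is a subfunctor containing the images of all $X_i$, one gets $t(\coprod X_i)=\coprod X_i$.
\end{itemize}
If the directed-union description fails, the fallback is to verify directly, via the adjunction, that a grounded $W$ receives maps from all the $X_i$ only through $t(W)$. Then $\coprod X_i$, mapping to itself by the identity, must equal its own torsion part.
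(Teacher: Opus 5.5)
Your parts (1) and (2) are fine, and so is the $\cC_{\Bbbk}|_{\alpha}$ half of (3). There your explicit radical $t_{\alpha}(V)_{\beta}=\ker\nu_{\alpha\beta}$ exhibits the largest subobject in $\cC_{\Bbbk}|_{\alpha}$ more directly than the paper, which gets it from \Cref{le:lcc}.

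\textbf{The gap is in the eventually vanishing case.} You set $t(V)_{\beta}:=\bigcup_{\gamma}\ker\nu_{\gamma\beta}$ but never show that $t(V)$ lies in $\cC_{\Bbbk}|_{\circ}$. That is, you need a single $\delta$ with $t(V)_{\beta}=0$ for all $\beta\ge\delta$. This is not a componentwise triviality. Elements of $V$ could a priori die at unboundedly late stages as $\beta$ runs over the proper class $\cat{ON}$. In fact ``$t(V)\in\cC_{\Bbbk}|_{\circ}$ for all $V$'' is equivalent to every object being trim, which is the content of \Cref{le:ev.inj}.

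Without this step, your checks only show that the class $\{V:\ t(V)=V\}$ (objects all of whose \emph{elements} eventually vanish) behaves like a hereditary torsion class. That class is a priori larger than $\cC_{\Bbbk}|_{\circ}$. Your closing inference ``$t(\coprod X_i)=\coprod X_i$, hence eventually vanishing'' is therefore a non sequitur.

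This is exactly the step the paper isolates. Without well-poweredness, Popescu's criteria need the existence of a largest subobject in $\cC_{\Bbbk}|_{\circ}$. The paper obtains it from \Cref{le:lcc}: the ascending chain $t_{\alpha}(V)$, $\alpha\in\cat{ON}$, stabilizes, and its stable value is that largest subobject.

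A direct repair, as in step (I) of the proof of \Cref{th:inj.event.vnsh}, runs as follows:
\begin{enumerate}[(a)]
\item $t(V)=\ker\left(V\to V/t(V)\right)$ is grounded by \Cref{le:hgh.grnd}; say it is $\delta$-grounded.
\item By Replacement, the death times $\min\{\gamma:\nu_{\gamma\delta}x=0\}$ of the elements $x$ of the set $t(V)_{\delta}$ form a set of ordinals, bounded by some $\gamma_0$.
\item Then $t(V)_{\beta}=\Bbbk_{\beta}\nu_{\beta\gamma_0}\nu_{\gamma_0\delta}\left(t(V)_{\delta}\right)=0$ for all $\beta\ge\gamma_0$.
\end{enumerate}

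\textbf{The obstacle you single out does not exist.} A set-indexed family of objects has only a \emph{set} of grounding levels, and that set has a supremum. Hence componentwise sums of objects of $\cC_{\Bbbk}$ are always grounded, and they are the coproducts in $\cC_{\Bbbk}$. Your claim that coproducts need not be componentwise is false; the pathology of $\cC_{\Bbbk}$ is on the side of products.

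In particular, if $X_i\in\cC_{\Bbbk}|_{\alpha_i}$, then $\bigoplus_i X_i$ already lies in $\cC_{\Bbbk}|_{\sup_i\alpha_i}$. No universal-quotient, directed-union or adjunction fallback is needed. Closure under sums is trivial; the proper-class size of $\cat{ON}$ actually bites in bounding the torsion part $t(V)$, and that is the step your proposal omits.
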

\begin{proof}
  To verify only \Cref{item:pr:trnc.grth.tors:1ca}, leaving the other, equally routine claims to the reader, simply apply the criteria of \cite[Theorem 4.8.4 and Corollary 4.8.5]{pop_ab-cat}:
  \begin{enumerate}[(a),wide]
  \item\label{item:pr:trnc.grth.tors:pf.her} that all $\cC_{\Bbbk}|_{\bullet}$ are hereditary (closed under taking subobjects) is immediate;
  \item\label{item:pr:trnc.grth.tors:pf.coher} they are all also \emph{co}hereditary (closed under quotients);

  \item\label{item:pr:trnc.grth.tors:sum.ext} and stable under both arbitrary sums and extensions. 
  \end{enumerate}
  Although well-powered-ness is assumed throughout \cite[\S 4.8]{pop_ab-cat}, an examination of the relevant proofs shows that all that is needed (given \Cref{item:pr:trnc.grth.tors:pf.her,item:pr:trnc.grth.tors:pf.coher,item:pr:trnc.grth.tors:sum.ext}) is the existence, for arbitrary objects in $\cC_{\Bbbk}$, of largest subobjects contained in $\cC_{\Bbbk}|_{\bullet}$. That, in turn, follows from \Cref{le:lcc}. 
\end{proof}

Recall \cite[\S 3.5]{brcx_hndbk-1} that \emph{(co)reflective} (full) subcategories are those whose inclusion functors have left (respectively right) adjoints. 

\begin{lemma}\label{le:trnc.co.refl}
  For $\alpha\in\cat{ON}$, the subcategories $\cC_{\Bbbk}|_{\alpha}$ and $\cC_{\Bbbk}|^{\alpha}$ are respectively reflective and coreflective in $\cC_{\Bbbk}$.
\end{lemma}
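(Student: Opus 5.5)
The plan is to write down the two adjoints explicitly as truncation functors and check the universal properties directly in $\ol{\cC}_{\Bbbk}$, confirming along the way that the outputs are grounded.

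\textbf{Reflection onto $\cC_{\Bbbk}|_{\alpha}$.} For $V=(V_\beta,\nu_{\beta'\beta})\in\cC_{\Bbbk}$ set
\begin{equation*}
  V|_{\alpha}:=\left(V_\beta\text{ for }\beta<\alpha,\ 0\text{ for }\beta\ge\alpha\right),
\end{equation*}
keeping the transition maps $\nu_{\beta'\beta}$ for $\beta\le\beta'<\alpha$ and using zero maps otherwise. This is a quotient of $V$ in $\ol{\cC}_{\Bbbk}$: the kernel of $V\to V|_{\alpha}$ is $V_{\ge\alpha}$, and the componentwise maps are compatible with transitions because every map into a zero component is zero. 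Groundedness of $V|_\alpha$ is automatic, since it is $\alpha$-grounded as soon as it vanishes from $\alpha$ on. For the universal property, take $f\colon V\to E$ with $E\in\cC_{\Bbbk}|_{\alpha}$. Then $f_\beta=0$ for $\beta\ge\alpha$, so $f$ kills $V_{\ge\alpha}$ and factors uniquely through the componentwise-surjective map $V\to V|_\alpha$. This factorization is the cokernel of $V_{\ge\alpha}\to V$, and cokernels are unproblematic for groundedness, as noted before \Cref{ex:ker.grnd}.

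\textbf{Coreflection onto $\cC_{\Bbbk}|^{\alpha}$.} The naive candidate is $V_{\ge\alpha}$, which is a subobject of $V$. Any morphism $E\to V$ with $E\in\cC_{\Bbbk}|^{\alpha}$ vanishes below $\alpha$, so it factors uniquely through $V_{\ge\alpha}$. The factorization respects transitions because it does so in $V$, and $\ol{\cC}_{\Bbbk}$-monos are componentwise injective.

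\textbf{Main obstacle.} The point needing care is that $V_{\ge\alpha}$ must itself lie in $\cC_{\Bbbk}$, i.e.\ be grounded; this is exactly the kernel issue of \Cref{ex:ker.grnd}. It is, however, harmless here. Suppose $V$ is $\gamma$-grounded, and put $\delta:=\max(\alpha,\gamma)$. For $\beta>\delta$ we have
\begin{equation*}
  V_\beta=\Bbbk_\beta\nu_{\beta\gamma}V_\gamma=\Bbbk_\beta\nu_{\beta\delta}\left(\nu_{\delta\gamma}V_\gamma\right)\subseteq\Bbbk_\beta\nu_{\beta\delta}V_\delta\subseteq V_\beta,
\end{equation*}
so $V_{\ge\alpha}$ is $\delta$-grounded. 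Alternatively, $V_{\ge\alpha}$ is the kernel of $V\to V|_\alpha$, so groundedness also follows from \Cref{le:hgh.grnd}. I would present the direct computation and mention the lemma as a cross-check.

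Finally, functoriality of both constructions and the naturality of the unit $V\to V|_\alpha$ and counit $V_{\ge\alpha}\to V$ are routine from uniqueness of the factorizations. This completes the adjunctions.
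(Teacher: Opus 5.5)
Your proposal is correct and follows the paper's route: the adjoints are exactly the truncation functors $\bullet|_{\alpha}$ (left adjoint to the inclusion of $\cC_{\Bbbk}|_{\alpha}$) and $\bullet|^{\alpha}$ (right adjoint to the inclusion of $\cC_{\Bbbk}|^{\alpha}$), which the paper simply declares immediate. Your explicit check that $V|^{\alpha}=V_{\ge\alpha}$ is $\max(\alpha,\gamma)$-grounded whenever $V$ is $\gamma$-grounded fills in a detail the paper leaves unstated. That direct computation is self-contained, so it is a better justification than appealing to \Cref{le:hgh.grnd}.
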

\begin{proof}
  That the truncation functors
  \begin{equation}\label{eq:trncs}
    \begin{aligned}
      \cC_{\Bbbk}
      \ni
      \left(V_{\beta}\right)_{\beta}
      \xmapsto{\quad\bullet|_{\alpha}\quad}
      \left(
      \begin{aligned}
        V_{\beta},\quad&\beta<\alpha\\
        \{0\},\quad&\beta\ge \alpha
      \end{aligned}
      \right)\in \cC_{\Bbbk}|_{\alpha}\\
      \cC_{\Bbbk}
      \ni
      \left(V_{\beta}\right)_{\beta}
      \xmapsto{\quad\bullet|^{\alpha}\quad}
      \left(
      \begin{aligned}
        \{0\},\quad&\beta<\alpha\\
        V_{\beta},\quad&\beta\ge \alpha
      \end{aligned}
      \right)\in \cC_{\Bbbk}|^{\alpha}
    \end{aligned}    
  \end{equation}
  are respectively left and right adjoint to the two inclusions is immediate. 
\end{proof}

As to injective objects in $\cC_{\Bbbk}$, \Cref{th:inj.event.vnsh} confirms that the Grothendieck subcategories $\cC_{\Bbbk}|_{\alpha}$ of eventually vanishing objects account for all of them. 

\begin{theorem}\label{th:inj.event.vnsh}
  The injectives of $\cC_{\Bbbk}$ are precisely those objects that are injective in some Grothendieck subcategory $\cC_{\Bbbk}|_{\alpha}$ of eventually vanishing objects. 
\end{theorem}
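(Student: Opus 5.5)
The plan is to prove the two inclusions separately. The easy one is that injectives of some $\cC_{\Bbbk}|_{\alpha}$ are injective in $\cC_{\Bbbk}$. The substantive one is that every injective of $\cC_{\Bbbk}$ is eventually vanishing, and this will reuse the essential-chain argument from the proof of \Cref{th:is.ab.no.env}.

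\emph{Eventually vanishing injectives are injective.} Let $I\in\cC_{\Bbbk}|_{\alpha}$ be injective there. Take a monomorphism $A\hookrightarrow B$ in $\cC_{\Bbbk}$ and a morphism $A\to I$. By \Cref{le:trnc.co.refl}, truncation $\bullet|_{\alpha}$ is left adjoint to the inclusion. So $A\to I$ factors uniquely through $A|_{\alpha}$. Truncation is computed levelwise and simply zeroes out the indices $\ge\alpha$, so it is exact. In particular $A|_{\alpha}\to B|_{\alpha}$ is still monic; here we use that kernels in $\cC_{\Bbbk}$ are levelwise, by \Cref{le:hgh.grnd}. Injectivity of $I$ in $\cC_{\Bbbk}|_{\alpha}$ then extends $A|_{\alpha}\to I$ to $B|_{\alpha}\to I$. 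Precomposing with the unit $B\to B|_{\alpha}$ gives the required extension $B\to I$.

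\emph{Injectives of $\cC_{\Bbbk}$ vanish eventually.} Conversely, let $I$ be injective in $\cC_{\Bbbk}$ and $\alpha$-grounded.
\begin{itemize}[wide]
\item If $I_\beta=0$ for some $\beta>\alpha$, groundedness forces $I_{\beta'}=0$ for all $\beta'\ge\beta$, so $I\in\cC_{\Bbbk}|_{\beta}$. Moreover, $\cC_{\Bbbk}|_{\beta}$ is a full subcategory closed under subobjects and quotients (\Cref{pr:trnc.grth.tors}), so its monomorphisms are monomorphisms of $\cC_{\Bbbk}$. Hence $I$ is injective in $\cC_{\Bbbk}|_{\beta}$.
\item So assume $I_\beta\ne 0$ for all $\beta$, and aim for a contradiction.
\end{itemize}

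First I find a vector $v\in I_\alpha$ whose images $\nu_{\beta\alpha}v$ never vanish. Suppose instead every $v\in I_\alpha$ dies at some stage. Since $I_\alpha$ is a set, the supremum $\beta$ of these stages is an ordinal, and $\nu_{\beta\alpha}I_\alpha=0$. Groundedness then gives $I_\beta=0$, which is excluded.

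With such a $v$, the subobject generated by $v$ consists of the lines $\Bbbk_\beta\,\nu_{\beta\alpha}v$ for $\beta\ge\alpha$. It is isomorphic to $M^{\alpha}$ of \Cref{re:no.gen.no.wp}. So $M^{\alpha}\hookrightarrow I$.

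Next I build a shifted version of the chain of \Cref{le:trnc.chn.ess}. For $\delta\ge\alpha$ set
\[
Y^{\delta}_\beta=\begin{cases}0&\beta<\alpha,\\ \Bbbk_\delta&\alpha\le\beta\le\delta,\\ \Bbbk_\beta&\beta>\delta,\end{cases}
\]
so that $Y^{\alpha}=M^\alpha$ and the $Y^\delta$ form a strictly increasing proper-class chain. Each inclusion $Y^{\delta}\le Y^{\delta'}$ should be essential, for the same reason as in \Cref{le:trnc.chn.ess}. A nonzero subobject of $Y^{\delta'}$ contains a nonzero vector in some degree. Its image at degree $\delta'+1$ is a nonzero scalar multiple of the generator, since all transition maps are injective field inclusions. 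That scalar multiple lies in $Y^\delta$.

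Finally, argue as in the proof of \Cref{th:is.ab.no.env}. By injectivity of $I$ and transfinite induction, extend $M^\alpha\hookrightarrow I$ successively along the chain $Y^\delta$. Each extension is again monic, because $Y^\delta\le Y^{\delta'}$ is essential \cite[Lemmas 6.21, 6.22]{freyd_abcats}. At limit stages, use AB5 to pass through unions, which are again essential. This produces a proper-class strictly ascending chain of subobjects of $I$, contradicting \Cref{le:lcc}.

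I expect the main obstacle to be the step producing the never-vanishing vector $v$ and checking essentiality of the shifted chain. Both depend on the injectivity of the transition maps $\Bbbk_\beta\le\Bbbk_{\beta'}$ and on the set-size of $I_\alpha$. The remaining steps are formal.
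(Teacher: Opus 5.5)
Your proposal is correct and follows the paper's proof. The converse direction goes through the exact left adjoint $\bullet|_{\alpha}$ to the inclusion. The main direction uses a never-vanishing vector to produce $M^{\alpha}=T^{\alpha}\hookrightarrow I$, which is the base of the shifted essential chain from \Cref{le:trnc.chn.ess}, so transfinite extension into $I$ contradicts \Cref{le:lcc}. You also make explicit two points the paper leaves tacit: the shifted chain $Y^{\delta}$ itself, and the fact that an eventually vanishing $\cC_{\Bbbk}$-injective is injective in $\cC_{\Bbbk}|_{\beta}$. The only slips are cosmetic; for example, your second case should read $I_\beta\ne 0$ for all $\beta>\alpha$.
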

\begin{proof}
  Two claims constitute the result.
  
  \begin{enumerate}[(I),wide]
  \item\textbf{: Injectives in $\cC_{\Bbbk}$ eventually vanish.} The construction proving \Cref{le:trnc.chn.ess} of course functions in arbitrary $\cC_{\Bbbk}|^{\alpha}$, so the object $T^{\alpha}\in \cC_{\Bbbk}|^{\alpha}$ whose $\beta$-component is $\Bbbk_{\beta}$ for $\beta\ge \alpha$ (and $\{0\}$ otherwise) is the basis of a proper-class-long chain of essential extensions and cannot embed in an injective. To conclude, it remains to observe that 
    \begin{equation*}
      V\in \cC_{\Bbbk}\text{ does not eventually vanish}
      \iff
      \exists\left(\alpha\in\cat{ON}\right)
      \exists\left(T^{\alpha}\lhook\joinrel\xrightarrow{\quad} V\right).
    \end{equation*}
    Indeed, there is an embedding $T^{\alpha}\lhook\joinrel\to V$ as soon as $V$ is $\alpha$-grounded: it suffices to argue that some $v\in V_{\alpha}$ does not eventually vanish (i.e. $\nu_{\beta\alpha}v\ne 0$ for all $\beta>\alpha$). This in turn follows from the observation that the subspace
    \begin{equation*}
      \left\{v\in V_{\alpha}\ :\ \exists\left(\beta\in\cat{ON}\right)\left(\nu_{\beta\alpha}v=0\right)\right\}
      \le
      V_{\alpha}
    \end{equation*}
    of eventually vanishing elements in $V_{\alpha}$ will be annihilated by $\nu_{\beta\alpha}$ for a single sufficiently large $\beta$, whence the eventual vanishing of $V$ as a whole by $\alpha$-grounded-ness.
    
  \item\textbf{: Injectives in $\cC_{\Bbbk}|_{\alpha}$ remain injective in $\cC_{\Bbbk}$.} The inclusion functor of the smaller category in the larger is right adjoint to the upper exact functor \Cref{eq:trncs}, so preserves injectivity. 
  \end{enumerate}
\end{proof}

We also record a stronger form of stability for grounded objects that can be bootstrapped out of the initial defining requirement that $\Bbbk_{\beta}\otimes_{\Bbbk_{\alpha}}V_\alpha\to V_{\beta}$ being onto. 

\begin{definition}\label{def:atrm}
  We refer to an object $V=(V_{\alpha})_{\alpha}$ as \emph{$\alpha$-trim} if $\nu_{\gamma\beta}$ are injective for $\gamma\ge \beta\ge \alpha$. An object is \emph{trim} if $\alpha$-trim for some ordinal $\alpha$.
\end{definition}

\begin{lemma}\label{le:ev.inj}
  Every $V=(V_{\alpha})_{\alpha}\in \cC_{\Bbbk}$ is trim.

  Equivalently, the induced maps $\Bbbk_{\beta}\otimes_{\Bbbk_{\alpha}}V_\alpha\to V_{\beta}$ are isomorphisms for $\beta\ge \alpha\gg 0$. 
\end{lemma}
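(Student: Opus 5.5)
Fix $V\in \cC_{\Bbbk}$, and assume it is $\alpha$-grounded for some $\alpha$. The plan has three steps: show that kernels of transition maps from $V_\alpha$ stabilize, extend injectivity to all later transitions, and then check the equivalence with the tensor formulation.

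\textbf{Step 1: the kernels $K_\beta$ stabilize.} For $\beta\ge\alpha$ set
\begin{equation*}
  K_\beta:=\ker\left(\nu_{\beta\alpha}\right)\le V_\alpha .
\end{equation*}
This is an ascending chain of subspaces of the fixed set-sized space $V_\alpha$, indexed by the proper class $\cat{ON}_{\ge\alpha}$. A strictly ascending chain of subspaces of $V_\alpha$ has length bounded by a cardinal depending only on $\dim V_\alpha$; this is the same argument as in \Cref{le:lcc}, or in part (I) of the proof of \Cref{th:inj.event.vnsh}. So there is an ordinal $\alpha_0\ge\alpha$ with $K_\beta=K_{\alpha_0}$ for all $\beta\ge\alpha_0$.

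\textbf{Step 2: transitions from $\alpha_0$ onward are injective.} Since $V$ is also $\alpha_0$-grounded, we may as well replace $\alpha$ by $\alpha_0$. The conclusion of Step 1 then reads: for $\gamma\ge\beta\ge\alpha$, every element of $\nu_{\beta\alpha}V_\alpha$ killed by $\nu_{\gamma\beta}$ is already $0$. In other words, $\nu_{\gamma\beta}$ is injective on $\nu_{\beta\alpha}V_\alpha$.

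This is not yet injectivity on all of $V_\beta=\Bbbk_\beta\,\nu_{\beta\alpha}V_\alpha$, and closing that gap is the main obstacle. The kernel of $\nu_{\gamma\beta}$ is a $\Bbbk_\beta$-subspace meeting $\nu_{\beta\alpha}V_\alpha$ trivially, exactly as in \Cref{ex:ker.grnd}.

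\begin{itemize}
\item \emph{First attempt: a second stabilization.} Apply the same stabilization to the kernels of $\Bbbk_\beta\otimes_{\Bbbk_\alpha}V_\alpha\to V_\gamma$. This fails as stated, because the ambient space changes with $\beta$.
\item \emph{Second attempt: the kernel $L_\beta$ of $\Bbbk_\beta\otimes_{\Bbbk_\alpha}V_\alpha\to V_\beta$.} Base-change $L_\beta$ along $\Bbbk_\beta\le\Bbbk_\gamma$ and compare it with $L_\gamma$. We have $\Bbbk_\gamma\otimes_{\Bbbk_\beta}L_\beta\le L_\gamma$ inside $\Bbbk_\gamma\otimes_{\Bbbk_\alpha}V_\alpha$.
\item \emph{Reducing to a set-sized chain.} Pick a $\Bbbk_\alpha$-basis $(e_i)_i$ of $V_\alpha$. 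Then $L_\beta$ is a subspace of $\Bbbk_\beta^{(I)}$, and a $\Bbbk_\beta$-subspace of $\Bbbk_\beta^{(I)}$ is determined by its reduced-row-echelon data. Each time $L_\gamma$ strictly exceeds $\Bbbk_\gamma L_\beta$, the \emph{set of pivot positions} (a subset of $I$) strictly grows. Pivot sets form an ascending chain in the power set of $I$, so they stabilize after a number of strict increases bounded by $|I|$.
\item \emph{Conclusion of the obstacle.} Past that point, $L_\gamma=\Bbbk_\gamma\otimes_{\Bbbk_\beta}L_\beta$. This gives
\begin{equation*}
  V_\gamma\cong\Bbbk_\gamma\otimes_{\Bbbk_\beta}V_\beta ,
\end{equation*}
so $\nu_{\gamma\beta}$ is the injective map $V_\beta\to\Bbbk_\gamma\otimes_{\Bbbk_\beta}V_\beta$, using field flatness.
\end{itemize}

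\textbf{Step 3: equivalence of the two formulations.} If $\nu_{\gamma\beta}$ is injective for $\gamma\ge\beta\ge\alpha$, then for large $\beta_0$ the map $\Bbbk_\beta\otimes_{\Bbbk_{\beta_0}}V_{\beta_0}\to V_\beta$ is onto by groundedness. I expect injectivity of this map to follow from the same pivot-stabilization argument, which is intrinsically a statement about these tensor maps. Conversely, if the tensor maps are isomorphisms, then $\nu_{\gamma\beta}$ is identified with $V_\beta\to\Bbbk_\gamma\otimes_{\Bbbk_\beta}V_\beta$, which is injective. So the two formulations agree after enlarging the threshold ordinal.
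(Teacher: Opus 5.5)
Your argument is correct, and its skeleton matches the paper's. Both proofs track the kernels $L_\beta$ of $\Bbbk_\beta\otimes_{\Bbbk_\alpha}V_\alpha\to V_\beta$, use $\Bbbk_\gamma L_\beta\le L_\gamma$, and bound the number of strict jumps $\Bbbk_\gamma L_\beta\lneq L_\gamma$ by $\dim V_\alpha$.

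The difference is how you deal with the ambient space changing with the field. The paper fixes one large top index, chosen so that more than $\dim V_0$ jumps occur below it. It then base-changes all earlier kernels into the single space $\Bbbk_{\alpha}\otimes_{\Bbbk_0}V_0$ and contradicts the dimension bound on strictly ascending chains there. You instead give each $L_\beta$ a field-independent invariant, its pivot set inside a fixed index set $I$. The whole class-indexed chain then lives in the set of subsets of $I$ and simply stabilizes. This gives you $L_\gamma=\Bbbk_\gamma L_\beta$ for \emph{all} $\gamma\ge\beta$ past the threshold, limit stages included. The paper only counts successor kernels $\ker\nu_{\beta+1,\beta}$, which needs a small adjustment to catch non-injectivity of $\nu_{\gamma\beta}$ at limit $\gamma$. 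Your route also proves the tensor-isomorphism form directly, whereas the paper's route avoids well-orderings and echelon forms.

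Some points should be made explicit.

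\textbf{Pivot convention.} Well-order $I$ and take the pivot of a nonzero vector to be the \emph{largest} index in its finite support. Then the usual reduction terminates by well-foundedness, and nested subspaces $W\lneq W'$ satisfy $P(W)\lneq P(W')$. With the smallest-index convention this fails: the coefficient-sum-zero subspace of $F^{(\mathbb{N})}$ has pivot set all of $\mathbb{N}$.

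\textbf{Scalar extension.} State that $P(\Bbbk_\gamma L_\beta)=P(L_\beta)$, because a $\Bbbk_\beta$-basis of $L_\beta$ with distinct leading indices remains such over $\Bbbk_\gamma$. This is exactly what makes ``the pivot set strictly grows'' true.

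\textbf{Unused material.} Step 1 and the first half of Step 2 play no role and can be deleted. The sentence ``replace $\alpha$ by $\alpha_0$'' is also slightly off. What Step 1 gives is injectivity of $\nu_{\gamma\beta}$ on $\nu_{\beta\alpha}V_\alpha$ for the \emph{original} $\alpha$, not on $\nu_{\beta\alpha_0}V_{\alpha_0}$.

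\textbf{Step 3.} There is nothing left to ``expect'' here. Injectivity of the $\nu_{\gamma\beta}$ alone does not force the tensor maps to be injective; the target object of \Cref{ex:ker.grnd} shows this. But your pivot argument already proves the tensor statement, which implies trimness, so both assertions of the lemma are established. Say so plainly.
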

\begin{proof}
  That the two claims are content-wise equivalent is self-evident. Moreover, there is no loss of generality in assuming $V$ 0-grounded (for we can always focus only on ordinals dominating the grounded-ness level).

  Assuming $\ker \nu_{\beta\alpha}\ne \{0\}$ for arbitrarily large $\beta\ge \alpha$,
  \begin{equation*}
    \exists\left(\alpha\in\cat{ON}\right)
    \left(
      \sharp\left\{\beta\in\cat{ON}\ :\ \beta< \alpha,\ \ker\nu_{\beta+1,\beta}\ne \{0\}\right\}
      >
      \dim V_0
    \right).
  \end{equation*}
  Writing $\Bbbk_{\alpha}\otimes_{\Bbbk_0}V_0\xrightarrowdbl{\pi}V_{\alpha}$, the subspaces $\Bbbk_{\alpha}\pi^{-1}V_{\beta}$ constitute an ascending chain in $\Bbbk_{\alpha}\otimes_{\Bbbk_0}V_0$, strictly so over an index set of cardinality larger than that space's dimension. Plainly, this is a contradiction. 
\end{proof}

%%%%%%%%%%%%%%%%%%%%%%%%%%%%%%%%
%%%%%%%%%%%%%%%%%%%%%%%%%%%%%%%%
\section{Set-decorated abelian categories}\label{se:set.dec}

In light of \Cref{th:inj.event.vnsh}, it will not be unnatural to investigate whether the $\cC_{\Bbbk}$ examples can in some fashion be conjoined with the completely injective-deficient (but complete) category ($\cA$, say) mentioned in \Cref{re:other.ab5.no.inj} so as to produce incomplete AB5 categories with no (non-zero) injectives whatsoever. This is indeed possible by abstracting away from $\cA$ (which we recall explicitly in the process) the relevant machinery producing new abelian categories out of old.

\begin{definition}\label{def:dec}
  For an \emph{additive category} $\cC$ (i.e. \cite[\S VIII.2]{mcl_2e} one enriched over the category $\cat{Ab}$ of abelian groups and admitting finite (co)products) the category $\cC\triangleleft\cat{Set}$ of \emph{set-decorated $\cC$-objects} (or: the \emph{set decoration of $\cC$}) has
  \begin{itemize}[wide]
  \item triples
    \begin{equation*}
      (c,S,f)
      \quad:\quad
      c\in \cC
      ,\quad
      S\in \cat{Set}
      ,\quad
      S\xrightarrow[\quad \text{function}\quad]{f}\cC(c,c)
    \end{equation*}
    as objects;

  \item with morphisms $(c,S,f)\xrightarrow{\psi}(c',S',f')$ defined as those $\cC$-morphisms $\psi\in \cC(c,c')$ making
    \begin{equation*}
      \begin{tikzpicture}[>=stealth,auto,baseline=(current  bounding  box.center)]
        \path[anchor=base] 
        (0,0) node (l) {$c$}
        +(2,.5) node (u) {$c$}
        +(2,-.5) node (d) {$c'$}
        +(4,0) node (r) {$c'$}
        ;
        \draw[->] (l) to[bend left=6] node[pos=.5,auto] {$\scriptstyle f(s)$} (u);
        \draw[->] (u) to[bend left=6] node[pos=.5,auto] {$\scriptstyle \psi$} (r);
        \draw[->] (l) to[bend right=6] node[pos=.5,auto,swap] {$\scriptstyle \psi$} (d);
        \draw[->] (d) to[bend right=6] node[pos=.5,auto,swap] {$\scriptstyle f'(s)$} (r);
      \end{tikzpicture}
    \end{equation*}    
    commutative for all $s\in S\cup S'$;

  \item where evaluation of the $f$ component of $(c,S,f)$ on elements outside $S$ yielding $0\in \cC(c,c)$.
  \end{itemize}
  $\bullet\triangleleft \cat{Set}$ is what in 2-categorical parlance is referred to as a \emph{pseudofunctor} \cite[Definition 4.1.2]{jy_2dcat}. 
\end{definition}

The category referenced in \Cref{re:other.ab5.no.inj}, then, is $\cat{Ab}\triangleleft \cat{Set}$. A simple remark:

\begin{lemma}\label{le:prsv.cmpl}
  \begin{enumerate}[(1),wide]
  \item\label{item:le:prsv.cmpl:colims} The pseudofunctor $\bullet\triangleleft\cat{Set}$ preserves abelianness as well as $\cD$-shaped (co)limits for any small category $\cD$, and when those exist in $\cC$ the forgetful functor $\cC\triangleleft \cat{Set}\xrightarrow{U=U_{\cC}} \cC$ preserves them. 

  \item\label{item:le:prsv.cmpl:ab} $\bullet\triangleleft \cat{Set}$ also preserves the AB$\bullet$ conditions for $\bullet\in \left\{3,3^*,4,4^*,5,5^*\right\}$.

  \item\label{item:le:prsv.cmpl:pow} The forgetful functor $U$ preserves and reflects (co-)-well-powered-ness, as well as the $\lambda$-ACC condition of \Cref{le:lcc} and its dual descending counterpart, on posets of either subobjects or quotient objects. 
  \end{enumerate}
\end{lemma}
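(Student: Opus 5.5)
The plan is to build every limit and colimit in $\cC\triangleleft\cat{Set}$ ``underneath'' $U$, i.e. to show that $U$ creates whatever $\cD$-shaped (co)limits exist in $\cC$; the remaining claims then follow quickly.

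\textbf{Colimits.} Fix a small diagram $d\mapsto (c_d,S_d,f_d)$ in $\cC\triangleleft\cat{Set}$ and suppose $c:=\varinjlim c_d$ exists in $\cC$, with structure maps $\iota_d$. Put $S:=\bigcup_d S_d$. For $s\in S$, the endomorphisms $f_d(s)$ (zero when $s\notin S_d$) are compatible with the transition morphisms, because the morphism condition in \Cref{def:dec} is imposed for all $s\in S_d\cup S_{d'}$. Hence the maps $\iota_d f_d(s)$ form a cocone and induce a unique $f(s)\in\cC(c,c)$. I will then check that:
\begin{itemize}[wide]
\item each $\iota_d$ is a morphism $(c_d,S_d,f_d)\to(c,S,f)$ (immediate from the construction);
\item a cocone into some $(c',S',f')$ factors through $(c,S,f)$. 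For $s\in S\cup S'$, the two composites $\psi f(s)$ and $f'(s)\psi$ agree after precomposing with every $\iota_d$, since the cocone legs intertwine $f_d(s)$ with $f'(s)$, including the case where one side is $0$. By the universal property of $c$ they therefore agree.
\end{itemize}

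\textbf{Limits.} The dual construction works: take $c:=\varprojlim c_d$, again with $S=\bigcup S_d$, and define $f(s)$ via the limit's universal property. This is where the convention ``$f(s)=0$ off $S$'' is essential. For $s\in S'\setminus S$, the condition in the universal property reads $f'(s)\psi=0$ on the side of a competing cone, which is inherited from the legs.

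\textbf{Abelianness.} I will show kernels and cokernels exist (the special cases above, computed in $\cC$ with the same set $S\cup S'$), and that $\cC\triangleleft\cat{Set}$ is additive with hom-groups being subgroups of those in $\cC$. A monomorphism in $\cC\triangleleft\cat{Set}$ is mono in $\cC$: maps out of $(k,\emptyset,0)$ are plain $\cC$-maps commuting with zero, and testing against the kernel shows this. Consequently mono $=$ kernel of its cokernel holds because it holds in $\cC$ and $U$ creates both. Since $U$ is faithful and creates the relevant (co)limits, it preserves and reflects exactness. The AB conditions in part (2) then follow: AB3/AB3$^*$ are the existence of (co)products, and AB4/AB5 (and their duals) are exactness of (co)products and filtered colimits, which is checked after applying the exact, faithful, (co)limit-creating $U$.

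\textbf{Part (3).} By the above, subobjects of $(c,S,f)$ correspond to $\cC$-subobjects $c'\le c$ that are stable under all $f(s)$, each equipped with its restricted decoration over the same $S$. A mono $(c',S',f')\to(c,S,f)$ gives the same subobject as its version restricted to $S$; the points of $S'\setminus S$ must act by zero, compatibly. So the subobject poset of $(c,S,f)$ embeds as a sub-poset of that of $c$, and preservation of well-poweredness and of $\lambda$-ACC follows, with the dual argument for quotients. Reflection uses the fully faithful section $c\mapsto(c,\emptyset,0)$, whose subobject poset is exactly that of $c$.

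\textbf{Expected obstacle.} The main difficulty is the bookkeeping around $S\cup S'$ and the zero convention: ensuring that morphisms to or from objects with differing index sets behave well. In particular, an isomorphism class of subobjects must not depend on the choice of $S'$. I would handle this by normalizing every subobject to carry the ambient $S$.
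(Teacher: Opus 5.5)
Your proposal is correct and follows essentially the paper's route. You lift (co)limits along $U$ using $S=\bigcup_d S_d$ with the induced endomorphisms. You transfer the AB conditions through $U$ via exactness of filtered colimits and (co)products, where the paper uses the lattice identity $\sum_i(d\cap c_i)=d\cap\sum_i c_i$ together with $U$ reflecting monomorphisms. For (3) you use that decorations outside $S$ act by zero on subobjects, so the subobject poset of $(c,S,f)$ embeds into that of $c$; your fully faithful section $c\mapsto(c,\emptyset,0)$ also supplies the converse direction, which the paper leaves implicit.

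There are two small slips to repair:
\begin{enumerate}[(a)]
\item To see that monics in $\cC\triangleleft\cat{Set}$ are monic in $\cC$, test against the kernel with its restricted decoration $(k,S,f|_{k})$, not $(k,\emptyset,0)$. The inclusion $k\to c$ is a morphism out of $(k,\emptyset,0)$ only if every $f(s)$ kills $k$.
\item In the limit step, the condition for $s\in S'\setminus S$ reads $\psi f'(s)=0$, not $f'(s)\psi=0$.
\end{enumerate}
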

\begin{proof}
  \begin{enumerate}[(1),wide]
  \item Focusing on products only to fix ideas (as the rest is not materially more difficult), simply note that
    \begin{equation*}
      c:=\prod_{i\in I}c_i
      ,\quad
      S:=\bigcup_i S_i
      ,\quad
      S\ni s
      \xmapsto{\quad f\quad}
      \prod_i f_i(s)\in \cC(c,c)
    \end{equation*}
    is a product of $(c_i,S_i,f_i)$ in $\cC\triangleleft\cat{Set}$, recalling the convention that $f_i(s)$ defaults to 0 when $s\not\in S_i$.

  \item For AB5, say: $U$ reflects monomorphisms, so that
    \begin{equation*}
      \sum_i\left((d,T,g)\cap (c_i,S_i,f_i)\right)
      =
      (d,T,g)\cap \sum_i(c_i,S_i,f_i)
    \end{equation*}
    for subobjects $(d,T,g),(c_i,S_i,f_i)\le (c,S,f)$ in $\cC\triangleleft\cat{Set}$ follows from the analogous equality for $d,c_i\le c$ in $\cC$. The other claims are, if anything, simpler.

  \item The claims all follow from the remark that for a subobject $(c',S',f')\le (c,S,f)$, $f'(s')$ must vanish whenever $f(s')$ does (so in particular, whenever $s'\not\in S$). Consequently, for a class
    \begin{equation*}
      \cS=\left\{(c_s,S_s,f_s)\le (c,S,f)\right\}
    \end{equation*}
    of subobjects, the corresponding class
    \begin{equation*}
      \left\{c'\le c\ :\ \exists\left(s\in \cS\right)\left(\left(c'\le c\right)\cong \left(c_s\le c\right)\right)\right\}
    \end{equation*}
    is a set if and only if $\cS$ was one to begin with. 
  \end{enumerate}
\end{proof}

\begin{remark}\label{re:saft}
  Recall the automatic (left) adjointness \cite[\S V.8, Theorem 2]{mcl_2e} of cocontinuous functors defined on cocomplete, co-well-powered categories with generating sets. As a manifestation of generator absence, \cite[Foreword, p.-16]{freyd_abcats} that the forgetful functor $\cat{Ab}\triangleleft\cat{Set}\to \cat{Ab}$ illustrating adjunction failure: it is cocontinuous, with no adjoint of either type.

  Indeed: right (left) adjoints of exact functors preserve injectivity (projectivity) \spr{015Z}, so either adjoint companion's existence would contradict $\cC$'s having no non-zero injectives or projectives (which property \cite[Exercise 6A(5)]{freyd_abcats} claims).
\end{remark}

\begin{theorem}\label{th:incmpl.no.inj}
  For a $\cat{ON}$-indexed tower \Cref{eq:kappa} of infinite field extensions the category $\cC_{\Bbbk}\triangleleft\cat{Set}$ is AB5, does not have $\aleph_0$-indexed products, and has no non-zero injectives or projectives. 
\end{theorem}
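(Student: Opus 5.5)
The plan has three parts. AB5 comes from the preservation results. The failure of countable products is transported through the forgetful functor. The absence of injectives and projectives is argued directly in $\cC_{\Bbbk}\triangleleft\cat{Set}$ by combining \Cref{th:inj.event.vnsh} with a Freyd-style argument that exploits the free set-indexed endomorphisms.

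\textbf{AB5 and products.} By \Cref{th:is.ab.no.env}, $\cC_{\Bbbk}$ is AB5 abelian, so \Cref{le:prsv.cmpl}\Cref{item:le:prsv.cmpl:colims},\Cref{item:le:prsv.cmpl:ab} give abelianness and AB5 for $\cC_{\Bbbk}\triangleleft\cat{Set}$. For products, I will show that a product $(c,S,f)$ of countably many objects $(c_n,\emptyset,0)$ in the decorated category would yield a product $c$ of the $c_n$ in $\cC_{\Bbbk}$, contradicting \cite[Theorem 2.4]{MR4092830} for a suitable sequence $(c_n)$. The key observation is that $X\mapsto (X,\emptyset,0)$ is a full embedding and is right adjoint-like on the relevant hom-sets: a morphism $(d,T,g)\to(c_n,\emptyset,0)$ is a map $\psi$ with $\psi g(t)=0$ for all $t$. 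Testing against the objects $(d,\emptyset,0)$, a cone $d\to c_n$ in $\cC_{\Bbbk}$ factors uniquely through $c$ as decorated morphisms, so $c$ satisfies the product property for all $d$. Hence $c=\prod c_n$ in $\cC_{\Bbbk}$, a contradiction.

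\textbf{No injectives.} Let $I=(c,S,f)$ be a nonzero injective. First, $U(I)=c$ must be eventually vanishing. Otherwise, by the argument in part (I) of the proof of \Cref{th:inj.event.vnsh}, some $T^{\alpha}$ embeds in $c$. Equipping the essential chain over $T^{\alpha}$ with the zero decoration keeps it essential in the decorated category, since subobjects of $(X,\emptyset,0)$ are exactly $(Y,\emptyset,0)$. The transfinite extension argument of \Cref{th:is.ab.no.env} then produces a proper-class chain of subobjects of $I$, contradicting \Cref{le:prsv.cmpl}\Cref{item:le:prsv.cmpl:pow} together with \Cref{le:lcc}.

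So $c\in\cC_{\Bbbk}|_{\alpha}$ for some $\alpha$, and we may work inside the Grothendieck category $\cC_{\Bbbk}|_{\alpha}$. There I will imitate Freyd's proof for $\cat{Ab}\triangleleft\cat{Set}$. Pick $s\notin S$ and form $(c\oplus c,\{s\}\cup S,f')$, where $f'(s)$ is the shift $(x,y)\mapsto(0,x)$ and $f'$ acts diagonally on $S$. The map $x\mapsto(x,0)$ is then not a morphism, but $x\mapsto(0,x)$ is, giving an embedding $I\hookrightarrow(c\oplus c,\ldots)$. This embedding is essential because $f'(s)$ moves any nonzero sub-object into the copy of $I$. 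An essential embedding of an injective must split, hence be an isomorphism, which is impossible for $c\ne0$. This step is the main obstacle. One must check that the copy $0\oplus c$ really is essential: a decorated subobject $(d,\ldots)$ meeting $0\oplus c$ trivially must be stable under $f'(s)$, and $f'(s)$ maps $d$ into $0\oplus c$, so $f'(s)(d)=0$. This forces the first components of $d$ to vanish, so $d\le 0\oplus c$ and therefore $d=0$.

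\textbf{No projectives.} The dual shift construction, with quotient $(c\oplus c)\twoheadrightarrow c$ onto the first summand, gives a superfluous epimorphism onto a projective, which must split. That is impossible for the same reason. The eventual-vanishing reduction is not even needed here, because the shift argument is purely formal and works in any abelian $\cC$.

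On reflection, the shift argument does the whole job for injectives too: the essentiality verification never uses smallness. So I would present the direct shift argument as the core, and keep the \Cref{th:inj.event.vnsh} route only as a remark.
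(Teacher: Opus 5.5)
Your final argument is essentially the paper's: AB5 via \Cref{le:prsv.cmpl}, failure of countable products by transport along zero-decorated objects $(c_n,\emptyset,0)$, and the shift object $(c\oplus c,S\sqcup\{s\},f')$ of \Cref{le:c.in.cc} (with the summands swapped). That object is at once a proper essential extension and a proper superfluous cover of any object with non-zero underlying $c$.

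Two small repairs are needed. First, in the product step, uniqueness of decorated factorizations out of $(d,\emptyset,0)$ only controls those $\psi\colon d\to c$ with $f(s)\psi=0$. You should add that $X\mapsto(X,\emptyset,0)$ is a genuine fully faithful right adjoint, with left adjoint $(d,T,g)\mapsto d/\sum_{t}\operatorname{im}g(t)$. Its replete image is then closed under existing limits, which forces $f\equiv 0$ and $c\cong\prod_n c_n$ in $\cC_{\Bbbk}$. Second, drop the eventual-vanishing detour rather than keeping it as a remark. A $\cC_{\Bbbk}$-embedding $T^{\alpha}\hookrightarrow c$ is a morphism $(T^{\alpha},\emptyset,0)\to(c,S,f)$ only if every $f(s)$ kills its image, and this fails, for instance, when $f(s)=\id_c$.
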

\begin{proof}
  The (general) absence of countable products follows from its counterpart (\cite[proof of Theorem 2.4]{MR4092830}) in $\cC_{\Bbbk}$: observe that $\bullet\triangleleft \cat{Set}$ also \emph{reflects} the existence of $I$-indexed products (in addition to preserving them, conversely, when they are known to exist in the original category), since the underlying $\cC$-object $c$ of a product
  \begin{equation*}
    (c,S,f)
    :=
    \prod_i (c_i,\emptyset,\emptyset)
  \end{equation*}
  must be a product $\prod_i c_i$ in $\cC$. Abelianness and AB5 follow from \Cref{le:prsv.cmpl}\Cref{item:le:prsv.cmpl:ab}. As to the no-injectives/projectives claim, it follows from the fact (\Cref{le:c.in.cc}) that arbitrary $(V,S,f)\in \cC_{\Bbbk}\triangleleft \cat{Set}$ with non-zero underlying $V\in \cC_{\Bbbk}$ have proper essential extensions in both senses (i.e. are both domains of essential embeddings and codomains of essential epimorphisms). 
\end{proof}

\begin{lemma}\label{le:c.in.cc}
  For abelian $\cC$ an arbitrary object $(c,S,f)\in \cC\triangleleft\cat{Set}$ is the domain of an essential monic and the codomain of an essential epic, with the other object of the form $(c\oplus c,S',f')$.

  In particular, non-zero objects have proper essential extensions both as subobjects and quotient objects. 
\end{lemma}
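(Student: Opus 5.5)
The plan is to adapt Freyd's construction for $\cat{Ab}\triangleleft\cat{Set}$ (\cite[Exercise 6A]{freyd_abcats}). Fix $(c,S,f)$ and pick a fresh symbol $t\notin S$, setting $S':=S\sqcup\{t\}$. On $c\oplus c$ I will use the decoration
\begin{equation*}
  f'(s):=\begin{pmatrix} f(s)&0\\0&f(s)\end{pmatrix}\ (s\in S),
  \qquad
  f'(t):=\begin{pmatrix}0&0\\ \id_c&0\end{pmatrix},
\end{equation*}
so $t$ acts as the nilpotent shift sending the first summand to the second. The essential monic is the inclusion $\iota_2\colon c\to c\oplus c$ into the second summand. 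It is a morphism $(c,S,f)\to(c\oplus c,S',f')$ because $f'(s)\iota_2=\iota_2 f(s)$ and $f'(t)\iota_2=0=\iota_2\cdot 0$; recall that $f(t)=0$ by convention. Dually, the essential epic is the projection $\pi_1\colon(c\oplus c,S',f')\to(c,S,f)$. It commutes with everything since $\pi_1 f'(t)=0$. I will take the $c=0$ case to be trivial.

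For essentiality of $\iota_2$, I first identify subobjects of $(c\oplus c,S',f')$. By the proof of \Cref{le:prsv.cmpl}, these are subobjects $d\le c\oplus c$ in $\cC$ that are stable under every $f'(s)$ and under $f'(t)$, with the restricted decoration. Let $d$ be such a subobject with $d\cap\iota_2 c=0$. Then $f'(t)$ restricted to $d$ lands in $d\cap \iota_2c=0$. Since $f'(t)=\iota_2\pi_1$ and $\iota_2$ is monic, this gives $\pi_1|_d=0$, so $d\le\ker\pi_1=\iota_2 c$. Hence $d=d\cap\iota_2c=0$. The intersection is computed in $\cC$, which is legitimate because $U$ preserves finite limits by \Cref{le:prsv.cmpl}.

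Essentiality of $\pi_1$ is the dual argument. Suppose $d$ is an $S'$-stable subobject with $d+\ker\pi_1=c\oplus c$, that is, $\pi_1(d)=c$. Applying $f'(t)$ gives $\iota_2\pi_1(d)=\iota_2c\le d$. Then $d\supseteq \iota_2c$ and $\pi_1 d=c$ together force $d=c\oplus c$. The step I expect to need the most care is phrasing these element-free arguments in a general abelian $\cC$, using images and kernels rather than elements. In particular, I need $f'(t)(d)\le d$ to mean that the image of $d\to c\oplus c\xrightarrow{f'(t)}c\oplus c$ factors through $d$. I also need to confirm that essential epics in $\cC\triangleleft\cat{Set}$ are detected this way, which is the standard criterion: an epic $p$ is essential iff every subobject $d$ with $p(d)$ everything is the whole object.

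Properness for non-zero $c$ follows because $\iota_2$ is not an isomorphism: its cokernel $c$ is non-zero, and similarly $\ker\pi_1\ne0$. This gives the final sentence of the statement.
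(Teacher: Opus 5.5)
Your proof is correct and is essentially the paper's argument: the paper also adjoins a fresh index acting as the nilpotent shift $\begin{pmatrix}0&\id\\0&0\end{pmatrix}$ on $c\oplus c$ and embeds $c$ as the summand that this shift kills. The only differences are that you swap the two summands and that you write out the essential-epic half explicitly, using the same object and $\pi_1$, whereas the paper treats only the monic case and leaves the dual to the reader.
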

\begin{proof}
  We will focus on monomorphisms, recycling the device employed in \cite[Exercise 6A]{freyd_abcats} in proving that $(\bZ,\emptyset,\emptyset)$ cannot embed into an injective:
  \begin{equation*}
    S':=S\sqcup \{s_0\}
    \quad
    \left(\text{disjoint union}\right)
    ,\quad
    f'|_S=f
    ,\quad
    f'(s_0)
    :=
    \begin{pmatrix}
      0&\id\\
      0&0
    \end{pmatrix}
    \in
    \cC(c\oplus c,c\oplus c).
  \end{equation*}
  The desired essential extension $(c,S,f)\le (c\oplus c,S',f')$ identifies $c$ with the \emph{first} summand (whereas $f'(s_0)$ maps the \emph{second} onto the first identically). 
\end{proof}

%%%%%%%%%%%%%%%%%%%%%%%%%%%%%%%%
%%%%%%%%%%%%%%%%%%%%%%%%%%%%%%%%

\addcontentsline{toc}{section}{References}
%\bibliography{bib}{}
%\bibliographystyle{plain}

% BEGIN INSERTED BBL (lrg-ab5-xv1.bbl)
\def\polhk#1{\setbox0=\hbox{#1}{\ooalign{\hidewidth
  \lower1.5ex\hbox{`}\hidewidth\crcr\unhbox0}}}

% END INSERTED BBL

\Addresses

\end{document}